\documentclass[reqno,11pt]{amsart}
\usepackage{latexsym}
\usepackage{amsfonts}
\usepackage{amssymb}
\usepackage{mathrsfs}
\usepackage[all]{xy}
\usepackage{bbm}
\usepackage{xcolor}
\usepackage{ulem}
\usepackage[mathscr]{euscript}

\topmargin 0pt
\oddsidemargin 12pt
\evensidemargin 12pt
\textwidth 150mm
\textheight 229mm

%\documentclass[11pt,a4paper]{article}
%\documentclass[12pt,reqno,twoside]{amsart}

%% Language and font encodings
\usepackage[english]{babel}
%\usepackage[utf8x]{inputenc}
%\usepackage[a4paper,top=3cm,bottom=2cm,left=3cm,right=3cm,marginparwidth=1.75cm]{geometry}

%% Useful packages
\usepackage{amsmath}
\usepackage{graphicx}
\usepackage[colorinlistoftodos]{todonotes}
\usepackage[colorlinks=true, allcolors=blue]{hyperref}
\usepackage{tikz-cd}
\usepackage{tikz}
\usepackage{pst-node}
\usepackage{mathtools}
\usetikzlibrary{graphs}
\usepackage{dsfont}
\usepackage{bbm}

\usepackage[T2A]{fontenc}

\pagestyle{myheadings}
%\markboth{D. Kleinbock and A. Rao}{Diophantine approximation in Euclidean norm}

\theoremstyle{plain}
\newtheorem{thm}{Theorem}[section]
\newtheorem{claim}{Claim}[thm]
\newtheorem{lem}[thm]{Lemma}
\newtheorem{rem}[thm]{Remark}
\newtheorem{prop}[thm]{Proposition}
\newtheorem{cor}[thm]{Corollary}
\newtheorem{defn}[thm]{Definition}
\newtheorem{exmp}[thm]{Example}

\theoremstyle{definition}

\theoremstyle{remark}

\numberwithin{equation}{section}

\newcommand{\D}{\mathbf{Div}}
\newcommand{\SL}{\operatorname{SL}}
\newcommand{\ad}{\operatorname{ad}}

\newcommand{\Ad}{\operatorname{Ad}}

\newcommand{\R}{{\mathbb{R}}}

\newcommand{\Z}{{\mathbb{Z}}}
\newcommand{\N}{{\mathbb{N}}}

\newcommand{\va}{{\bf a}}
\newcommand{\vb}{{\bf b}}
\newcommand{\vc}{{\bf c}}

\newcommand{\ve}{{\bf e}}

\newcommand{\ignore}[1]{{}}

\newcommand {\comm}[1]   {\textcolor{red}{#1}}

%%%%%%%%%%%% Ignore environment
\newif\ifignore
%%%%%%%%%%%%%%%%%%%%%%%%%%%%

%%%%%%%%%%%%%%%%%%%%%%%%%%%%%%%%%%%%%%%%%%%%%%%%%%%%%%%%%%%%%%%%%%%%%%%%%%
%Uri's attempt to add a header 
%
%author comments in margin

%              Latin Letters
%------------------------------------------------------------------------------------------------------------

%\newcommand{\be}{\beta}

\newcommand{\lam}{\lambda}

%------------------------------------------------------------------------------------------------------------
%-----Capital letters--------------

\newcommand{\bR}{\mathbb{R}}
\newcommand{\bZ}{\mathbb{Z}}

\newcommand{\bT}{\mathbb{T}}

%%%%%%%Groups
%\newcommand{\SL}{\operatorname{SL}}
%\newcommand{\SO}{\operatorname{SO}}

%\newcommand{\ASL}{\operatorname{ASL}}
%\newcommand{\GL}{\operatorname{GL}}
%\newcommand{\PSL}{\operatorname{PSL}}
%------------------------------------------------------------------------------------------------------------
%              Special commands
%------------------------------------------------------------------------------------------------------------
%\newcommand{\def}{\overset{\on{def}}{=}}

%\newcommand\norm[1]{\|#1\|}

\newcommand\seta[1]{\left\{#1\right\}}
\newcommand\pa[1]{\left(#1\right)}
\newcommand\idist[1]{\langle#1\rangle}

\newcommand\on[1]{\operatorname{#1}}

\newcommand\tb[1]{\textbf{#1}}
\newcommand\mat[1]{\pa{\begin{matrix}#1\end{matrix}}}

\newcommand{\xtheta}{x_{\theta}}

%%%%%%%%%%%%%%%%%%% End of Uri's header

\title[k-divergence]{K-Divergent lattices}
\author{Guy Lachman, Anurag Rao, Uri Shapira, Yuval Yifrach}
\address{{\tt guy.lachman@gmail.com}, 
{\tt oar\_garuna@hotmail.com}, {\tt ushapira@gmail.com}, {\tt huckhprj@me.com}}

%\date{July 23, 2022}
\begin{document}

\begin{abstract}
    We introduce a novel concept in topological dynamics, referred to as $k$-divergence, which extends the notion of divergent orbits. Motivated by questions in the theory of inhomogeneous Diophantine approximations, we 
    investigate this notion in the dynamical system given by a certain flow on the space
    of unimodular lattices in $\bR^d$. Our main result is the existence of $k$-divergent lattices for any $k\ge 0$. In fact, we utilize the emerging theory of parametric geometry 
    of numbers and calculate the Hausdorff dimension of the set of $k$-divergent lattices.
\end{abstract}

%\subjclass{11J04; 11J13, 37A17, 37D40}
\maketitle
%\large
\section{Introduction}
\subsection{$k$-divergence}
In this article we investigate a novel concept in topological dynamics. We are motivated by a particular example of a diagonal flow acting on the space of lattices with specific implications to questions in Diophantine approximation. As a consequence, our analysis in this paper is restricted to this particular example. Nevertheless, we start the introduction abstractly.

Let $X$ be a topological space and $H$ a topological (semi) group acting continuously on $X$. The pair $(X,H)$ is referred to as a topological dynamical 
system. Given $x\in X$, the set 
\begin{equation*}
    Hx:=\{hx \in X :h\in H\} 
\end{equation*}
is called the \textit{orbit} of $x$. Broadly speaking, topological dynamics concerns itself with investigating the topological properties of orbits. A basic notion which is highly relevant to our discussion is that of divergence: 
An orbit $Hx$ is said to be \textit{divergent} if the map
$h\mapsto hx$ is a proper map from $H$ to $X$; that is, the preimage of any compact set in $X$ is compact in $H$. Clearly, it is of interest to discuss
divergent orbits only when both $H$ and $X$ are non-compact.
It will be convenient to use the following terminology\footnote{For the sake of simplicity all topological spaces are assumed to be metric with the Heine-Borel property: compact sets are those which are closed and bounded. Examples are manifolds equipped with a complete Riemannian metric.}: If $Y$ is a topological space, we say that a sequence $(y_n)_{n \in \N}\subset Y$ is \textit{divergent} and write $y_n\to \infty $ if $y_n$ eventually 
leaves any compact set in $Y$. Using this terminology an orbit $Hx$ is divergent if and only if 
\begin{equation*}
    h_n\to \infty  \implies h_nx \to \infty. 
\end{equation*} 

It is convenient to use the following terminology.
\begin{defn}[Asymptotic accumulation points]
    Given a topological dynamical system $(X,H)$ and a point $x\in X$, we have its set of {asymptotic accumulation points}:
\begin{equation*}
    D(x): = \{y\in X: \text{ There exists } (h_n)_{n \in \N} \subset H \text{ with } h_n\to  \infty \text{ and } h_nx \to y\}.
    \end{equation*}
\end{defn}
Since the acting (semi)-group $H$ is fixed we do not record it in the notation, although strictly speaking this set should have been denoted $D_H(x).$  For example, $Hx$ is divergent if and only if $D(x) = \varnothing$.
The following is the main concept which will interest us in the rest of this paper.
\begin{defn}[$k$-divergent orbits, or points]\label{def: k-div}
Given a point $x\in X$, a finite sequence 
\begin{equation*}
    (y_0:=x, y_1, y_2,\dots,y_k)\in X^{k+1}\ \text{ with }\ y_{i+1} \in D(y_i) \text { for } 0\leq i <k
\end{equation*}  
is called an {accumulation sequence of length $k$} for $x$. The point $x$ is said to be \textit{$k$-divergent} if $k$ is the supremum over all lengths of accumulation sequences for $x$. Finally, 
the orbit $Hx$ is said to be $k$-divergent if any of its points are such (note that this is well defined). 
\end{defn}
In the above definition, $k$ takes values in $\Z_{\geq 0} \cup \{\infty\}$.
Let 
$$\D(k): = \lbrace x\in X: \text{$x$ is $k$-divergent}\rbrace$$
so that these sets give a partition of $X$.
\begin{rem}
    
We note that from the statistical point of view the sets $\D(k)$ for $k$ finite are very small in the sense that they are null sets
with respect to {any} $H$-invariant probability measure. This is because: if $x$ is $k$-divergent
for finite $k$, then $x\notin D(x)$, and so $x$ is not a recurrent point.
{(cf. the proof of Theorem \ref{escape} below.)}
\end{rem}

As mentioned above, any point $x \in \D(0)$ only has accumulation sequences of length $0$ which implies that $Hx$ is divergent {(and conversely)}. 
Any point $x \in \D(1)$ has an accumulation sequence of length $1$ and none which are longer. This shows that it is not divergent but only has asymptotic accumulation points which are themselves divergent.
Indeed, Definition \ref{def: k-div} is recursive in nature and the reader can check that, for any $k \in \N$,
\begin{equation}\label{eq: D(k)-char}
\D(k) = \left\lbrace x\in X: D(x)\subset \cup_{j=0}^{k-1} \D(j)\right\rbrace\smallsetminus \cup_{j=0}^{k-1}\D(j).
\end{equation}
In words, $x$ is $k$-divergent if its asymptotic accumulation points are all $j$-divergent for $j<k$ and if $x$ itself is not $j$-divergent for any $j<k.$

\subsection{Results in the space of lattices}

Let $m$ and $n$ be positive integers and let $d=m+n$.
Let $G$ denote the Lie group $\SL_d(\R)$, let $\Gamma$ denote $\SL_d(\Z)$, and consider the noncompact homogeneous space $X_d:=G/\Gamma$. The space $X_d$ is the moduli space of lattices in $\R^d$ of covolume 1 (with $g\Gamma$ identified with the $\Z$-span of the columns of $g$). 
For the acting semi-group we take the one-parameter diagonal semi-group
$\{a_t \in G:t\in \R_{\geq 0}\}$ where 
\begin{equation}\label{at-standard}
    a_t := \text{diag}(e^{t/m}, \dots, e^{t/m}, e^{-t/n},\dots, e^{-t/n})
\end{equation}
(where the number of positive eigenvalues is $m$ and the number of negative eigenvalues is $n$). Henceforth, whenever we refer to Definition \ref{def: k-div} we keep $(X_d, (a_t)_{t\geq 0})$ as the underlying dynamical system.

Understanding {this} dynamical system is tightly related to Diophantine approximation via the so-called Dani correspondence. See for example \cite[Theorem 2.14]{Da} where divergent orbits under this flow are shown to be in correspondence with singular matrices. See \cite[Definition 2.13]{Da} for the definition of singular matrices or  see \eqref{eq: singular} where singular vectors are defined. 
Thus, the set $\D(0)$ has received a lot of attention from both the dynamical and number theoretic perspectives.
There are certain obvious lattices in $\D(0)$; the so called \textit{degenerate} divergent orbits which correspond to lattices having a rational subspace whose volume form goes to zero under $a_t$ (see \cite[Definition 2.8]{Da}). These lattices are easy to construct. On the other hand, the existence of nondegenerate divergent lattices was first demonstrated by Khintchine in 1926 \cite{Kh}. Much more recent work involves the computation of the exact Hausdorff dimension of $\D(0)$ (see \cite{C, CC, KKLM, DFSU, So}) which turns out to be 
$\dim X_d - \frac{nm}{n+m}$, and as this number is strictly bigger than the dimension of the degenerate divergent trajectories, one obtains a refinement of Khintchine's result showing the abundance of non-degenerate divergent lattices.

As we will explain shortly (see \S\ref{subsec:motivation}), our motivation {is} to prove existence of $k$-divergent lattices for 
$k>0$ finite. 
The situation seems to differ from the $\D(0)$ case because there are seem to be no lattices
which are $k$-divergent for obvious reasons. When $n=m=1$ and continued fractions are available, it is not hard to construct explicit examples of $k$-divergent lattices in the plane. 
In an unpublished work, the third named 
author was able to construct examples of $1$-divergent lattices building on the results in \cite{Sh2}. However, given the sophisticated theory of parametric geometry of numbers, one can establish existence in a far more systematic manner. 

This theory was developed in \cite{SS, Ro, DFSU, So} and guarantees existence of lattices whose orbit's behaviour is controlled  by certain combinatorial graphs called templates. The following, which is the main result 
of this paper, establishes that indeed, for any $k \in \N$, $\D(k)$ is nonempty. Moreover, we show that (unless $n=m=1$) their dimensions coincide with that of $\D(0)$. 
This result should be regarded as a demonstration of the strength and flexibility of the emerging tools from the theory of parametric geometry of numbers.
\begin{thm}\label{main-thm}
For any $k \ge 1$,   
\begin{equation}\label{eq: exact dimension}
   \dim (\D(k))   = \dim(X_d) - \frac{mn}{m+n}
\end{equation}
where $\dim$ refers to the Hausdorff dimension.
\end{thm}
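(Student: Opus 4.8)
The plan is to establish the dimension formula by proving the two inequalities separately. The upper bound $\dim(\D(k)) \le \dim(X_d) - \frac{mn}{m+n}$ is the easier direction: I would show that every $k$-divergent lattice is in particular a lattice whose $a_t$-orbit escapes to infinity in an appropriate sense, so that $\bigcup_{k \ge 1}\D(k)$ is contained in the set of points whose orbit is not recurrent. More precisely, since a $k$-divergent point $x$ (for finite $k$) satisfies $x \notin D(x)$, the orbit $\{a_t x\}$ spends only a bounded amount of time in any compact set after some threshold, or at least has a nonempty divergent-on-average type behaviour; one then invokes the known upper bound for the Hausdorff dimension of the set of points with such escape behaviour, which by the work of \cite{KKLM} (and its refinements in \cite{DFSU, So}) is exactly $\dim(X_d) - \frac{mn}{m+n}$. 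The key point is that $\bigcup_{k\ge 0}\D(k)$ — or at least the part relevant here — sits inside the set of non-divergent-but-escaping orbits whose dimension is controlled from above by this same quantity.

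The substantive direction is the lower bound $\dim(\D(k)) \ge \dim(X_d) - \frac{mn}{m+n}$, and here the tool is parametric geometry of numbers. The strategy is to build, for each $k$, a family of templates (the combinatorial graphs governing the successive minima functions $t \mapsto (\log \lambda_i(a_t x))_i$) whose associated lattices are guaranteed to be exactly $k$-divergent, and whose associated set of lattices has the desired dimension. Concretely, I would design a template that looks, on a first long time-interval, like the templates used to produce divergent (i.e. $0$-divergent) orbits realizing the full dimension $\dim(X_d) - \frac{mn}{m+n}$; but instead of letting the combined trajectory diverge outright, the template returns to a bounded region and then ``restarts'' a divergent-type excursion, doing this $k$ times with excursions of rapidly increasing length, and finally settling into genuine divergence on the last leg. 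A lattice following such a template will have an accumulation sequence of length exactly $k$: each time the trajectory comes back from an excursion it accumulates on a lattice which itself (reading the template from that point on) undergoes one fewer nested excursion, hence is $(k-1)$-divergent; and the nesting cannot be extended because the template eventually genuinely diverges. One must also verify the upper constraint — that no accumulation sequence of length $k+1$ exists — which follows from the explicit control the template gives over $D(x)$ for all points $x$ and all its accumulation points.

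The main obstacle, and the heart of the argument, is the simultaneous control of two things: (i) ensuring that following the template forces the orbit to genuinely accumulate (in $X_d$, not just to escape and return) on lattices that are themselves $(k-1)$-divergent — this requires knowing that the limiting lattice inherits a template of the right shape, which in turn requires the template-realization theorems of \cite{DFSU, So} to be applied in a ``relative'' or ``telescoped'' form along the orbit; and (ii) arranging the excursions so that the dimension is not lost. For (ii) the point is that the dimension $\dim(X_d) - \frac{mn}{m+n}$ is already achieved by the divergent-orbit construction on a single long excursion, and the finitely many additional constraints imposed by the nesting (matching up the returns, prescribing behaviour on the short connecting intervals) are ``lower-dimensional'' perturbations: formally, one shows the set of $k$-divergent lattices produced this way contains a bi-Lipschitz (or at least dimension-preserving) image of the divergent set from the first excursion, crossed with finitely many bounded-dimension factors, so the dimension is bounded below by $\dim(X_d) - \frac{mn}{m+n}$. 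Pinning down this dimension-preservation rigorously — in particular checking that the template-to-lattice correspondence behaves well under the restrictions we impose and does not collapse dimension — is where the bulk of the technical work will lie.
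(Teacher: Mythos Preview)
Your upper-bound sketch is essentially the paper's argument: one shows that a $k$-divergent lattice is divergent on average (the paper does this by applying Poincar\'e recurrence to any weak-$*$ limit of the empirical measures $\mu_{x,T}$, whose support would otherwise contain a recurrent point that is simultaneously $j$-divergent for some $j<k$) and then invokes \cite{KKLM}. Your phrasing ``spends only a bounded amount of time in any compact set after some threshold'' is stronger than what $x\notin D(x)$ actually gives, but your hedge toward divergence on average lands in the right place.

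The lower bound, however, has a genuine gap in the template design. Your template makes \emph{finitely many} excursions---``doing this $k$ times''---and then ``settles into genuine divergence on the last leg.'' But if the template eventually diverges, then for \emph{every} sequence $t_n\to\infty$ one has $a_{t_n}x\to\infty$, hence $D(x)=\varnothing$ and the lattice is $0$-divergent, not $k$-divergent. A single return time does not manufacture an element of $D(x)$: membership in $D(x)$ requires an \emph{unbounded} sequence of times along which the orbit converges in $X_d$. For $D(x)\ne\varnothing$ the template must return to a bounded region \emph{infinitely often}. The paper's template $f^{(1)}$ does exactly this: infinitely many excursions of linearly increasing depth, each returning to $0\in\R^d$; the orbit is then not divergent, but at any return time the \emph{next} excursion is already so deep that every accumulation point has divergent forward orbit, forcing $D(x)\subset\D(0)$. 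For $k>1$, the template $f^{(k)}$ is obtained by replaying, on each of infinitely many successively longer intervals, an initial segment of $f^{(k-1)}$; accumulation points then inherit (up to bounded error) the template $f^{(k-1)}$ and are $(k-1)$-divergent by induction. There is no terminal divergent leg anywhere in the construction.

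Your dimension mechanism is also not the one the paper uses. There is no bi-Lipschitz product with the $0$-divergent set; instead one computes the score $\underline\delta(f^{(k)})=mn-\tfrac{mn}{m+n}$ directly (the self-similar block structure reduces this to an averaging computation over a single basic block) and applies the variational principle of \cite{DFSU} to the finite-perturbation class $\{f:\ f\sim f^{(k)}\}$. The passage from $\operatorname{Mat}_{m\times n}(\R)$ to $X_d$ then comes from the $H^-H^0$-invariance of $\D(k)$.
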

Note, we prove Theorem~\ref{main-thm} by establishing tight upper and lower bounds on the dimension of $\D(k)$, for each $k \in \N$, with the lower bound coming from the formulas of \cite{DFSU} and the upper bound arising as an easy corollary of the main result in \cite{KKLM}. (See Corollaries~\ref{lower-bound-cor}, \ref{upper-bound-cor} below.)
\subsection{Inhomogeneous Diophantine approximation}\label{subsec:motivation}
This subsection is not needed for the rest of the paper and is included for motivational purposes and to put the discussion in context.
Our original motivation for proving the existence of $k$-divergent lattices for $k\ge 1$ stems from an attempt to construct a counterexample to a claim in \cite{Sh1}. In order to explain things in more detail we need to fix some terminology. Since this is a motivational discussion, we do not strive to describe things in utmost generality and restrict our attention to the case  $n=1$ 
 and $d=m+1$ so that
the sets $\D(k)$ are defined with respect to the semiflow 
\begin{equation*}
    a_t = \operatorname{diag}(e^{t/m}, \dots, e^{t/m}, e^{-t}) \text{ for } t \in \R_{\geq 0}.
\end{equation*}

One of the basic questions in inhomogeneous Diophantine approximation is to understand, for given vectors $\theta,\eta\in \bR^m$,  
the rate at which the quantity $\idist{q\theta - \eta}$ decays as $q\to \infty$ in the integers. Here, we have used the notation
$\langle \cdot \rangle$ for the distance to $\Z^m$ in the supremum norm. One set of particular interest is the set of \textit{badly approximable targets for} $\theta \in \R^m$: 
$$\on{Bad}_\theta :=\seta{\eta\in \bR^m : \liminf_{q\to\infty} q^{1/m}\idist{q\theta - \eta} > 0}.$$
Note that since $\idist{\cdot}$ is defined modulo $\bZ^m$, we can (and will) treat $\theta$ and $\eta$ as points on the $m$-torus $\bT^m: = \bR^m/\bZ^m$ and, as a result, treat $\on{Bad}_\theta$ as a subset of $\bT^m$. 

Recall that $\theta$ is said to be \textit{singular} if 
\begin{equation}\label{eq: singular}
    \lim_{Q\to\infty} Q^{1/m}\min\seta{ \idist{q\theta}:1\le q\le Q} = 0.
\end{equation} 
Dani's correspondence then says that if we let $\xtheta$ denote the $d$-dimensional lattice
$$\xtheta := \mat{I_m &\theta \\ 0 &1 }\bZ^d,$$
then $\theta$ is singular
if and only if $\seta{a_t\xtheta \in X :t\ge 0}$ is divergent. Alternatively, in our current terminology, $\theta$ is singular if and only if $\xtheta$ is $0$-divergent. We extend this terminology as follows:
\begin{defn}
    A vector $\theta\in \bR^m$ is said to be $k$-divergent if $x_\theta$ is a $k$-divergent lattice. 
\end{defn}
Let us note here that at the moment we do not have a reasonable characterization of $k$-divergence of 
$\theta$ in terms of Diophantine inequalities
for $k\ge 1$ but we expect that such a characterization should exist.

In \cite{ET} it was shown for every $\theta$, that $\on{Bad}_\theta$ has full Hausdorff dimension and is in fact a winning set for
Schmidt's game. The line of thought for the current discussion is to complement this fact about $\on{Bad}_\theta$ by establishing that 
although this set is big from the dimension point of view, it is small in the sense that it 
is a null set with respect to a large explicit family of probability measures on $\bT^d$.
Clearly, we have in mind some Diophantine restrictions on $\theta$:
For if $\theta \in \R^m$ is rational, then $\on{Bad}_\theta$ is {cofinite} and is consequently of full measure with respect to any non-atomic measure.
In \cite{Sh1} it was claimed that, once $\theta$ is non-singular, then $\on{Bad}_\theta$ should be a null-set with respect to 
{any}
non-atomic algebraic measure on $\bT^d$.
An algebraic measure is by definition a translate of a Haar measure on a closed subgroup of $\bT^d$. 
David Simmons found a gap in the proof presented in \cite{Sh1} and since then the third named author has been trying to determine whether the claim is true or not.

We mention that there is a subtle distinction between the full Haar measure $\lambda$ on $\bT^m$ and algebraic measures of smaller dimension. For the full Haar measure, the argument in \cite{Sh1} goes through with minor modifications to show that: if $\theta$ is non-singular, then $\lambda(\on{Bad}_\theta)=0$. For complete proofs, see \cite{Ki, Mo, MRS}.
Further, the recent paper \cite{Ki} also shows that for many\footnote{It is not clear at the moment if $\lam(\on{Bad}_\theta) = 1$ characterizes singularity
(although it seems unlikely to be the case). It is easy to see that $\lam(\on{Bad}_\theta)\in\seta{0,1}$.} singular $\theta$, $\lam(\on{Bad}_\theta) =1$.
See also the paper \cite{LSS} where related results on $\on{Bad}_\theta$ are obtained.

As is clear from the technique 
used in \cite{Sh1}, the richer the dynamics of the orbit of $x_\theta$ the better the chances to verify the aforementioned claim. Indeed, in an attempt to close the aforementioned gap, recent work of the second and third authors with N.\ Moshchevitin shows that once $\theta$ is not $k$-divergent for small $k$, $\on{Bad}_\theta$ is null with respect to non-atomic algebraic measures.

\begin{thm}[\cite{MRS}]\label{thm:MRS}
Let $\theta\in\bR^m$ be a vector which is not $k$-divergent for any {$k\le m-1$}. 
Then for any 1-dimensional algebraic measure $\mu$ on $\bT^m$,  $\mu(\on{Bad}_\theta) = 0$. 

More explicitly, given any $\eta \in \R^m$ and $\vb \in \Z^m$, we have for Lebesgue almost every $t \in \R$,
\begin{equation*} 
\liminf_{q\to\infty} \ q^{1/m} \idist{q\theta  - (\eta + t\tb{b})} = 0.
\end{equation*}
\end{thm}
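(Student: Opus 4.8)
The plan is to run the inhomogeneous Dani correspondence, study the induced curve in the space of grids, and organise the argument as an induction on $m$ in which the divergence budget $k\le m-1$ is spent one unit per dimension. Write $Y_d:=\mathrm{ASL}_d(\R)/\mathrm{ASL}_d(\Z)$ for the space of unimodular grids in $\R^d$, let $Y_d\to X_d$ be the natural projection, whose fibres are $d$-dimensional tori, and for a grid $z$ put $\delta(z):=\min_{v\in z}\|v\|$. The affine analogue of the correspondence recalled after \eqref{eq: singular} states that, up to a null set, $\beta\in\on{Bad}_\theta$ if and only if $\inf_{t\ge0}\delta(a_t\,y_{\theta,\beta})>0$, where $y_{\theta,\beta}$ denotes the grid $x_\theta+(-\beta,0)$. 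Since $\vb\in\Z^m$, the curve $t\mapsto y_{\theta,\eta+t\vb}$ is a \emph{periodic} orbit of the one-parameter translation subgroup $U:=\{(I_d,-(t\vb,0)):t\in\R\}\subset\mathrm{ASL}_d(\R)$; hence the arclength probability measure $\tilde\mu$ on it is the unique $U$-invariant probability measure on this circle, and it projects to the given $1$-dimensional algebraic measure $\mu$. It therefore suffices to prove $\tilde\mu\big(\{z:\inf_{t\ge0}\delta(a_tz)>0\}\big)=0$.

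Suppose not; then $\tilde\mu(\cB_\e)=:c_0>0$ for some $\e>0$, where $\cB_\e:=\{z:\delta(a_tz)\ge\e\text{ for all }t\ge0\}$ is closed (by upper semicontinuity of $z\mapsto\delta(z)$) and satisfies $a_s\cB_\e\subseteq\cB_\e$. Since $a_s$ normalises $U$, merely rescaling its time parameter, each pushforward $(a_s)_*\tilde\mu$ is again $U$-invariant, supported on the circle $U\!\cdot a_sy_{\theta,\eta}$, and has $(a_s)_*\tilde\mu(\cB_\e)\ge c_0$. Because $\theta$ is not $0$-divergent, $D(x_\theta)\ne\varnothing$, so we may choose $s_j\to\infty$ with $a_{s_j}x_\theta\to x_\infty\in X_d$; as the base points then stay in a compact set and the fibres are compact, no mass escapes and, along a subsequence, $(a_{s_j})_*\tilde\mu\to\nu$, a $U$-invariant probability measure carried by the fibre over $x_\infty$ with $\nu(\cB_\e)\ge c_0$. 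By the classification of $U$-invariant measures on a torus, $\nu$ is an average of Haar measures on translates of the subtorus $S\subseteq\bT^d_{x_\infty}$ equal to the closure of the line $\R(\vb,0)$ modulo $x_\infty$.

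If $S$ is the whole fibre, $\nu$ is its Haar measure and $\nu(\cB_\e)$ is the measure of those grids over $x_\infty$ whose forward orbit stays uniformly $\e$-far from $0$; when $x_\infty$ is non-singular this measure is $0$ by the classical full-measure statement, a contradiction. If $x_\infty$ is singular, or $S$ is a proper subtorus, one must descend: a proper $S$ arises exactly when the dual lattice $x_\infty^*$ — which itself appears as a limit of the dual lattices of $a_{s_j}x_\theta$ — contains a nonzero vector orthogonal to $(\vb,0)$, and such a vector witnesses that the orbit of $x_\theta$ approaches a rational subspace transverse to $\vb$; splitting along it reduces the inhomogeneous problem to one in dimension $m-1$, attached to a vector $\theta'\in\R^{m-1}$, an integer direction $\vb'$, and a $1$-dimensional algebraic measure of positive $\on{Bad}_{\theta'}$-mass. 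Iterating, the dimension strictly decreases; the base case $m=1$ is the classical fact that for $\theta\in\R$ non-singular (equivalently, irrational and not $0$-divergent) and every $\beta$ outside a Lebesgue-null set one has $\liminf_q q\,\idist{q\theta-\beta}=0$, which contradicts the surviving positive-measure hypothesis and closes the induction.

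The crux — the step I expect to require the full strength of parametric geometry of numbers — is the bookkeeping of divergence along this descent: one must show that if $\theta\in\R^m$ is not $k$-divergent for any $k\le m-1$, then the reduced vector $\theta'$ is not $k$-divergent for any $k\le m-2$, so that exactly one unit of budget is consumed per step and the induction runs its full length. Concretely this means understanding how the template (equivalently, the successive-minima profile) governing the orbit of $x_\theta$ restricts to the subspace and quotient produced by the rational direction above, and checking that the combinatorial quantity measuring $k$-divergence drops by one — including a careful treatment of the directions along which $a_t$ fails to act by a scalar. As a sanity check, in the elementary reformulation the same obstruction surfaces as the need to prove $\sum_q\big|\{t\in I:\idist{q\theta-\eta-t\vb}<\tfrac{1}{nq^{1/m}}\}\big|=\infty$ for every $n$ — which amounts to a lower bound, powered by non-$k$-divergence, on the number of $q\le Q$ with $q\bar\theta-\bar\eta$ lying within $q^{-1/m}$ of $0$ in the torus $\bT^m/(\R\vb+\Z^m)$ — together with a routine second-moment estimate on the overlaps of these sets, which promotes divergence of the series to full measure of the relevant $\limsup$.
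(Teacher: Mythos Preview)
The paper does not contain a proof of this theorem. Theorem~\ref{thm:MRS} is stated with the attribution ``\cite{MRS}'' and is explicitly introduced as ``recent work of the second and third authors with N.\ Moshchevitin''; the reference \cite{MRS} is listed as ``in preparation.'' The present paper only quotes the result for motivational purposes in \S\ref{subsec:motivation}, so there is no argument here against which your proposal can be compared.

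That said, your outline is plausible in spirit --- the passage to grids and the use of accumulation points of $a_{s_j}x_\theta$ are natural --- but you should be aware that you have not actually proved anything yet. The two load-bearing steps you flag are genuine: (i) why the reduced vector $\theta'$ loses exactly one unit of ``divergence budget'' under your descent, and (ii) why a proper subtorus $S$ must appear precisely when a suitable rational subspace is approached. Neither follows from anything in the present paper; in particular, parametric geometry of numbers as developed here (Theorems~\ref{template-lattice} and~\ref{DFSU-dimension-thm}) gives existence and dimension statements for sets of lattices, not the structural compatibility of $k$-divergence with passage to rational subspaces and quotients that your induction requires. Your final paragraph's Borel--Cantelli sketch is also only a heuristic: the divergence of the series is exactly the hard input, and the second-moment control is not ``routine'' without further arithmetic information on $\theta$. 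So while the strategy may well be close to what \cite{MRS} does, as written it is an outline with the key lemmas left as assertions.
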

To complement the story, we mention that in \cite{MRS} the authors also give a counterexample to the aforementioned claim from
\cite{Sh1} by constructing a non-singular vector $\theta \in \bR^3$ along with a $\eta\in \bR^3$ such that for {any} $t\in \bR$,
one has
$$ \liminf_{q\to\infty} \ q^{1/3} \idist{q\theta  -( \eta + t\ve_3)} \ge c$$
for some fixed $c>0$. Here $\ve_3 \in \R^3$ denotes the third vector in the standard coordinates. Thus, the algebraic measure corresponding to the translation by $\eta$ of the 1-dimensional subtorus of the third coordinate is {fully} supported inside $\on{Bad}_\theta$. By Theorem~\ref{thm:MRS} we must have that $\theta$ is either
$1$-divergent or $2$-divergent. For more information we refer the reader to \cite{MRS}.

\section{Invariance properties of $k$-divergent lattices}
In this section we prove certain invariance properties of $k$-divergent lattices. 
Let $\mathfrak{g}$ denote the Lie algebra of $G$ and consider the diagonal element $\va \in \mathfrak{g}$ for which $\exp(\va) = a_1 \in G$.
Consider the derivation $\text{ad}_\va:\mathfrak{g} \to \mathfrak{g}$. Clearly, this map is diagonalizable. 
We define
\begin{equation}
    \mathfrak{h}^+ := \text{span}_\R\left\lbrace \vb \in \mathfrak{g} : \text{ad}_\va(\vb) = \lambda \vb \text{ with } \lambda >0\right\rbrace,
\end{equation}
\begin{equation}
    \mathfrak{h}^- := \text{span}_\R\left\lbrace \vb \in \mathfrak{g} : \text{ad}_\va(\vb) = \lambda \vb \text{ with } \lambda <0\right\rbrace,
\end{equation}
\begin{equation}
    \mathfrak{h}^0 := \left\lbrace \vb \in \mathfrak{g} : \text{ad}_\va(\vb) = 0\right\rbrace.
\end{equation}
We thus have a vector space direct sum 
\begin{equation*}
    \mathfrak{g} = \mathfrak{h}^+ \oplus \mathfrak{h}^- \oplus \mathfrak{h}^0.
\end{equation*}
Moreover, each of the above summands is a Lie subalgebra of $\mathfrak{g}$.
Let $H^+, H^-, H^0$ be the corresponding connected Lie subgroups of $G$.
\begin{prop}\label{weak-stable-invariance}
Let $k \in \Z_{\geq 0}$. Let $x \in \D(k)$ and let $g^-$ and $g^0$ belong to $H^-$ and $H^0$ respectively.
We then have that
\begin{equation}
    \left(x \in \D(k)\right) \iff \left( g^- g^0 x \in \D(k) \right).
\end{equation}
\end{prop}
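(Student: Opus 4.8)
The plan is to reduce everything to the single equivariance identity
\begin{equation*}
D(g^- g^0 x) \;=\; g^0 \, D(x) \qquad \text{for all } x \in X_d,\ g^- \in H^-,\ g^0 \in H^0 ,
\end{equation*}
and then to bootstrap the proposition from it by induction on $k$ using the recursive description \eqref{eq: D(k)-char} of $\D(k)$.

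To prove the identity I would use the two defining dynamical features of the summands $\mathfrak h^{\pm},\mathfrak h^{0}$: the element $g^0$ commutes with every $a_t$ (as $\ad_\va$ annihilates $\mathfrak h^0$), while $a_t g^- a_{-t}\to e$ as $t\to+\infty$ (as $\Ad(a_t)=\exp(t\,\ad_\va)$ contracts $\mathfrak h^-$ and $H^-$ is generated by $\exp(\mathfrak h^-)$). Combining these, for every $t$,
\begin{equation*}
a_t\, g^- g^0 x \;=\; \bigl(a_t g^- a_{-t}\bigr)\, g^0\, \bigl(a_t x\bigr),
\end{equation*}
and the left factor tends to $e$. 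Hence if $t_n\to\infty$ with $a_{t_n}x\to y$, continuity of the $G$-action gives $a_{t_n}g^-g^0x\to g^0 y$, so $g^0 D(x)\subseteq D(g^- g^0 x)$; conversely, if $t_n\to\infty$ with $a_{t_n}g^-g^0x\to z$, then $g^0 a_{t_n}x=\bigl(a_{t_n}g^-a_{-t_n}\bigr)^{-1}\bigl(a_{t_n}g^-g^0x\bigr)\to z$, whence $a_{t_n}x\to (g^0)^{-1}z$ and $z\in g^0 D(x)$. This gives the identity.

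For the induction, the case $k=0$ is immediate since $x\in\D(0)\iff D(x)=\varnothing\iff g^0 D(x)=\varnothing\iff g^- g^0 x\in\D(0)$, using that $g^0$ acts bijectively on $X_d$. Assume that for all $j<k$, all $z\in X_d$, all $g^-\in H^-$ and $g^0\in H^0$ one has $z\in\D(j)\iff g^- g^0 z\in\D(j)$; taking $g^-=e$, this says in particular that each stratum $\D(j)$ with $j<k$ is invariant under left translation by $H^0$. Now fix $x$, $g^-$, $g^0$. By \eqref{eq: D(k)-char}, $g^- g^0 x\in\D(k)$ iff $D(g^- g^0 x)\subseteq\bigcup_{j<k}\D(j)$ and $g^- g^0 x\notin\bigcup_{j<k}\D(j)$. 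By the identity above together with the $H^0$-invariance of each $\D(j)$, $j<k$, the first condition is equivalent to $D(x)\subseteq\bigcup_{j<k}\D(j)$; by the inductive hypothesis the second is equivalent to $x\notin\bigcup_{j<k}\D(j)$. Applying \eqref{eq: D(k)-char} again, the conjunction is equivalent to $x\in\D(k)$, which closes the induction and proves the proposition.

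The argument is not really obstructed anywhere; the only point that needs a little care is organizing the induction so that it produces, as a by-product, the $H^0$-invariance of the lower strata $\D(j)$ — this is precisely what allows $g^0$ to be moved across the inclusion $D(g^- g^0 x)\subseteq\bigcup_{j<k}\D(j)$. Everything else is the standard contraction estimate for the stable horospherical subgroup $H^-$ together with continuity of the action.
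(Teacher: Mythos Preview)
Your proof is correct and follows essentially the same approach as the paper: both establish the key identity $D(g^-g^0x)=g^0D(x)$ via the conjugation formula $a_t g^- g^0 x = (a_t g^- a_{-t})\,g^0\,a_t x$ together with the contraction $a_t g^- a_{-t}\to e$, and then induct on $k$ using the $H^0$-invariance of the lower strata $\D(j)$. The only cosmetic difference is that you state the equivariance identity explicitly and argue for general $g^-\in H^-$, whereas the paper writes out the computation for a single root vector and leaves the general case to the reader.
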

\begin{proof}
We work with elements $g^- = \exp(\vb)$ and $g^0= \exp(\vc)$ along with the assumptions $\ad_\va \vb = \lambda \vb$ for $\lambda <0$ and $\ad_\va \vc =0$. 
The reader is left to deduce the general case from this.
We then have, for $t>0$, 
\begin{equation}
\begin{split}
    a_t\exp(\vb)\exp(\vc)x &=  \left(a_t\exp(\vb) a_{-t}\right) \left( a_t \exp(\vc) a_{-t}\right) a_t x\\
    &= \exp\left(\Ad(\exp(t\va))\vb\right) \exp\left(\Ad(\exp(t\va))\vc\right) a_t x\\
    &= \exp\left(e^{t\ad_\va}\vb\right) \exp\left( e^{t\ad_{\va}}\vc\right) a_t x \\
    &= \exp\left(e^{t\lambda}\vb\right) \exp\left(\vc\right) a_t x.
    \end{split}
\end{equation}
The key observation is that $\lim_{t \to \infty}e^{t\lambda} = 0$.
Thus, for a divergent positive sequence $({t_n})$,
\begin{equation}\label{commutator}
    \left(\lim_{n\to\infty}a_{t_n} g^- g^0 x = y\right) \iff \left(\lim_{n\to \infty}a_{t_n} x = \left(\exp(-\vc\right)y\right).
\end{equation}
This immediately proves the proposition for the $\D(0)$ case.
For the $\D(k)$ case, we proceed by induction, assuming that the proposition has been proved for all $\D(j)$ with $j<k$.
Then, we can use the equivalence \eqref{commutator} along with the induction hypothesis that
\begin{equation*}
    \exp(\vc) \D(j) = \D(j) \text{ for } j<k.
\end{equation*}
This completes the proof and shows the so-called \textit{weak stable}
invariance property of $\D(k)$.
\end{proof}
Using the fact that $a_t$ has the form of \eqref{at-standard}, one can check that the groups defined above are actually given by
\begin{equation*}
    H^- = \left\lbrace  \left[ {\begin{array}{cc}
   I_m & 0  \\
   A & I_n 
  \end{array} } \right] : A \in \text{Mat}_{n\times m}(\R)\right\rbrace, 
\end{equation*}
\begin{equation*}
    H^0 = \left\lbrace  \left[ {\begin{array}{cc}
   A & 0  \\
   0 & B 
  \end{array} } \right] : A \in \text{Mat}_{m\times m}(\R), B \in \text{Mat}_{n \times n}(\R)\right\rbrace
\end{equation*}
and
\begin{equation*}
    H^+ = \left\lbrace  \left[ {\begin{array}{cc}
   I_m & A  \\
   0 & I_n 
  \end{array} } \right] : A \in \text{Mat}_{m\times n}(\R)\right\rbrace.
\end{equation*}
For future use, we use the notation
\begin{equation}\label{u_A}
    u_A := \left[ {\begin{array}{cc}
   I_m & A  \\
   0 & I_n 
  \end{array} } \right] \text{ and } x_A := u_A \Z^d.
\end{equation}

Our second invariance property arises from looking at the log-minima functions of a lattice.
We consider the supremum norm $\|\cdot \|$ on $\R^d$. Let $B(r)$ denote the supremum norm ball with radius $r$ and center at the origin. If $x$ is a lattice,
the $d$ successive minima functions of $x$ with respect to the supremum norm are defined as
\begin{equation*}
    \lambda_i(x) := \inf \left\lbrace r \in \R :  x\cap B(r) \text{ contains $i$ independent vectors}\right\rbrace.
\end{equation*}
The functions $\lambda_i : X_d \to \R$ are in fact continuous. We consider, for a fixed lattice $x \in X_d$ and the flow $(a_t)_{t\geq 0}$, the log-minima functions
\begin{equation}\label{log-minima-x}
    f_x(t): \R_{\geq 0} \to \R^d;\ t\mapsto \left(\log(\lambda_1(a_tx)), \dots, \log(\lambda_d(a_tx))\right).
\end{equation}
We denote the components of $f_x$ by $f_1, \dots, f_d$.
We also need to use the supremum norm to compare such functions on subsets of $\R$. We use the convention that, for $E \subset \R_{\geq 0}$ and $f,g: \R_{\geq 0} \to \R^d$,
\begin{equation}
    \|f-g\|_E := \sup_{t \in E} \|f(t) - g(t)\| = \sup_{t\in E} \left(\max_{i=1,\dots, d} |f_i(t)-g_i(t)|\right).
\end{equation}
For repeated implicit use in the future, we note the following reformulation of Mahler's compactness criterion:
\begin{lem}\label{Mahlerc}
    Let $x \in X_d$ be a lattice with log-minima function $f_x=(f_i)_{i=1,\dots,d}$. Then, for any sequence of positive times $(t_i)_{i \in \N}$, the sequence $(a_{t_i}x)_{i \in \N} \subset X_d$ has a convergent subsequence if and only if
    \begin{equation*}
        \limsup_{i \to \infty} f_1(t_i) > -\infty.
    \end{equation*}
\end{lem}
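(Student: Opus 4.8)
The plan is to deduce this directly from Mahler's compactness criterion together with the continuity of $\lambda_1$ on $X_d$ that was recorded just above. Recall that Mahler's criterion, phrased through the first successive minimum, provides two facts that we will use as black boxes: (i) for every $\delta > 0$ the set $K_\delta := \{y \in X_d : \lambda_1(y) \geq \delta\}$ is compact — it is relatively compact by Mahler's criterion since its elements have no nonzero vector of norm $< \delta$, and it is closed since $\lambda_1$ is continuous; and (ii) conversely, every compact subset $C \subseteq X_d$ satisfies $C \subseteq K_\delta$ for some $\delta > 0$, because the continuous positive function $\lambda_1$ attains a positive minimum on $C$. Everything else is bookkeeping with the logarithm.

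For the ``if'' direction, suppose $L := \limsup_{i\to\infty} f_1(t_i) > -\infty$. Pass to a subsequence $(t_{i_j})$ along which $f_1(t_{i_j}) \to L$; then $f_1(t_{i_j}) \geq L - 1$ for all large $j$, i.e.\ $\lambda_1(a_{t_{i_j}} x) \geq e^{L-1} =: \delta > 0$, so $a_{t_{i_j}} x \in K_\delta$ for all large $j$. Since $K_\delta$ is compact by (i), this subsequence has a convergent sub-subsequence, which is in particular a convergent subsequence of $(a_{t_i} x)$.

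For the ``only if'' direction, suppose $(a_{t_i} x)$ has a subsequence $a_{t_{i_j}} x \to y$ with $y \in X_d$. Since $\lambda_1 : X_d \to \R$ is continuous, $\lambda_1(a_{t_{i_j}} x) \to \lambda_1(y)$, and since $y$ is a genuine lattice of covolume $1$ (hence discrete), $\lambda_1(y) > 0$. Taking logarithms gives $f_1(t_{i_j}) \to \log \lambda_1(y) \in \R$, whence $\limsup_{i\to\infty} f_1(t_i) \geq \log \lambda_1(y) > -\infty$, as required. (Fact (ii) is what underlies the fact that the limit $y$ lies in some $K_\delta$, but for this direction one only needs $\lambda_1(y) > 0$ directly.)

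I do not expect a genuine obstacle here: the statement is precisely Mahler's criterion transported through the map $\log$, and the only points to watch are invoking the correct direction of the criterion in each implication and observing that convergence in $X_d$ forces the limiting lattice to have strictly positive first minimum, so that $\log \lambda_1$ remains finite along the convergent subsequence. One could equally well phrase the entire argument by saying that a sequence in $X_d$ is precompact if and only if it eventually enters some $K_\delta$, which — read off along a suitable subsequence — is exactly the condition $\limsup_i f_1(t_i) > -\infty$.
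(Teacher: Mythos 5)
Your proof is correct and follows essentially the same route as the paper's: both reduce the statement to the facts that the sets $\{y \in X_d : \lambda_1(y) \geq \delta\}$ are compact by Mahler's criterion and that every compact subset of $X_d$ is contained in one such set. You simply spell out the subsequence bookkeeping and the continuity of $\lambda_1$ more explicitly than the paper does.
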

\begin{proof}
    We simply note that, for any $M \in \R$, Mahler's compactness criterion gives that
    \begin{equation}\label{mahlerset}
        \left\lbrace y \in X_d: \log(\lambda_1(y)) \geq M\right\rbrace
    \end{equation}
    is a compact set.
    Moreover, every compact subset of $X_d$ is contained in a set of the form \eqref{mahlerset}.
\end{proof} 
\begin{defn}
    We define an equivalence relation on the set of continuous functions $C(\R_{\geq 0}, \R^d)$ by writing $f \sim g$ if \begin{equation}\label{equivalence}
    \|f-g\|_{\R_{\geq 0}} < \infty.
\end{equation}
\end{defn}
\begin{prop}\label{k-div-invariance-fp}
Let $k \in \Z_{\geq 0}$, let $x \in \D(k)$ and let $y$ be another lattice in $X_d$. 
Further, let $f_x$ and $f_y$ denote the log-minima functions for $x$ and $y$ respectively. 
Then,
\begin{equation}
    \left(f_x \sim f_y\right) \implies \left(y \in \D(k)\right).
\end{equation}
\end{prop}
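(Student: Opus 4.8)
The plan is to prove a symmetrized version of the statement, namely that $f_x \sim f_y$ implies $x \in \D(k) \iff y \in \D(k)$, by induction on $k$; the stated one-directional implication follows since $\sim$ is symmetric. The crux is the following bridge between the equivalence $\sim$ and the asymptotic-accumulation structure: if $f_x \sim f_y$, then for every divergent positive sequence $(t_n)$, the sequence $(a_{t_n}x)$ stays in a compact set if and only if $(a_{t_n}y)$ does. This is immediate from Lemma~\ref{Mahlerc}: $\limsup f_1^{(x)}(t_n) > -\infty$ iff $\limsup f_1^{(y)}(t_n) > -\infty$, because $|f_1^{(x)} - f_1^{(y)}|$ is bounded on $\R_{\geq 0}$. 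In particular, $Hx$ is divergent iff $Hy$ is divergent, which settles the base case $k=0$ (and also shows that $\D(0)$ is a union of $\sim$-classes).

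The main work is to relate the actual accumulation points $D(x)$ and $D(y)$, not just whether they are empty. Here the key observation is that $\sim$ is not quite enough to conclude $D(x) = D(y)$ on the nose; instead I would show that $D(x)$ and $D(y)$ have the same \emph{divergence type}, in the following sense. Suppose $y_1 \in D(x)$, witnessed by $a_{t_n}x \to y_1$ with $t_n \to \infty$. By Lemma~\ref{Mahlerc} applied along $(t_n)$ to $y$, the sequence $(a_{t_n}y)$ has a convergent subsequence; passing to it, $a_{t_{n_j}}y \to y_1'$ for some $y_1' \in D(y)$. I claim $f_{y_1} \sim f_{y_1'}$. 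Indeed, by continuity of the successive-minima functions, $f_{y_1}(s) = \lim_j f_{a_{t_{n_j}}x}(s) = \lim_j f_x(s + t_{n_j})$ for each fixed $s \geq 0$, and likewise $f_{y_1'}(s) = \lim_j f_y(s + t_{n_j})$; since $\|f_x - f_y\|_{\R_{\geq 0}} < \infty$, taking the limit gives $\|f_{y_1} - f_{y_1'}\|_{\R_{\geq 0}}$ bounded by the same constant, so $f_{y_1} \sim f_{y_1'}$. Thus every accumulation point of $x$ is $\sim$-equivalent to some accumulation point of $y$, and vice versa.

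With this, the induction closes cleanly. Assume the symmetrized statement holds for all $j < k$, and let $x \in \D(k)$, $f_x \sim f_y$. By \eqref{eq: D(k)-char}, $D(x) \subset \bigcup_{j<k}\D(j)$ and $x \notin \bigcup_{j<k}\D(j)$. First, $x \notin \D(j)$ for $j<k$ forces $y \notin \D(j)$ for $j<k$, by the induction hypothesis applied in the form "$z \in \D(j) \iff w \in \D(j)$ when $f_z \sim f_w$." Next, take any $y_1' \in D(y)$; by the previous paragraph there is $y_1 \in D(x)$ with $f_{y_1} \sim f_{y_1'}$, and $y_1 \in \D(j)$ for some $j < k$, so by the induction hypothesis $y_1' \in \D(j)$ as well. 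Hence $D(y) \subset \bigcup_{j<k}\D(j)$, and combined with $y \notin \bigcup_{j<k}\D(j)$, \eqref{eq: D(k)-char} gives $y \in \D(k)$. Running the same argument with the roles of $x$ and $y$ exchanged gives the converse, completing the induction.

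The step I expect to be the main obstacle is the pointwise-convergence identity $f_{y_1}(s) = \lim_j f_x(s + t_{n_j})$ used to transfer $\sim$ to the limit lattices. It relies on continuity of each $\lambda_i$ on $X_d$ (stated in the excerpt) together with the cocycle relation $a_s (a_{t_{n_j}} x) = a_{s + t_{n_j}} x$, so that $f_{a_{t_{n_j}}x}(s) = f_x(s + t_{n_j})$; one must be slightly careful that the convergence $a_{t_{n_j}}x \to y_1$ is genuinely in $X_d$ (guaranteed by Lemma~\ref{Mahlerc}) so that continuity of $\lambda_i$ applies. Everything else is bookkeeping around the recursive characterization \eqref{eq: D(k)-char} and the symmetry of $\sim$.
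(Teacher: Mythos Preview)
Your proof is correct and follows essentially the same induction-and-bridge strategy as the paper: use Mahler's criterion to match accumulation behavior of $x$ and $y$ along the same time sequences, show that the resulting limit points are $\sim$-equivalent, and close the induction via the recursive characterization \eqref{eq: D(k)-char}. Your pointwise-limit argument for $f_{y_1}\sim f_{y_1'}$ is in fact slightly cleaner than the paper's version (which arranges uniform closeness on growing intervals $[0,i]$ before applying the triangle inequality), but the overall structure is the same.
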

\begin{proof}
The proposition consists of $\N$-many assertions and we use induction (on $k$) to prove them all.

Assume $k=0$. By the definition of $\D(0)$ and Lemma \ref{Mahlerc},
\begin{equation*}
    \lim_{t\to\infty} \log(\lambda_1(a_t x))  = -\infty.
\end{equation*}
Since $f_x \sim f_y$, we see that $\lim_{t\to \infty}\log(\lambda_1(a_t y)) = -\infty$ as well.
Again, an application of Lemma \ref{Mahlerc} shows that $y\in \D(0)$.

Now assume $k>0$ and suppose our claim holds for every $j<k$. Let $C$ be a constant such that 
\begin{equation}\label{C}
    \|f_x-f_y\|_{\R_{\geq 0}} < C.
\end{equation}
Given that $x\in \D(k)$, we are to show that $y \in \D(k)$. Suppose $(t_i)_{i \in \N}$ is an unbounded positive sequence such that
\begin{equation}
    \lim_{i \to \infty} a_{t_i}y = w
\end{equation}
for some $w \in X_d$.
Using the joint continuity of the action of $(a_t)_{t \geq 0}$ on $X_d$, we may assume (perhaps after passing to a subsequence) that
\begin{equation}\label{atny-nclose-w}
    \|f_{a_{t_i}y} - f_w\|_{[0,i]} \leq 1 \ \text{ for every } i \in \N.
\end{equation}
Since $\{a_{t_i} y \in X_d : i \in \N \}$ is precompact and since $f_x \sim f_y$, Lemma \ref{Mahlerc} shows that a subsequence of $(a_{t_i}x)_{i \in \N}$ converges to some $z \in \D(j)$ (for $j <k$).
Again, passing to a subsequence, we can assume in addition to \eqref{atny-nclose-w} that
\begin{equation}\label{atnx-nclose-z}
    \|f_{a_{t_i}x} - f_z\|_{[0,i]} \leq 1.
\end{equation}
Applying \eqref{C}, \eqref{atny-nclose-w} and \eqref{atnx-nclose-z}, we get
\begin{equation}
\begin{split}
    \|f_w - f_z\|_{[0,i]} &= \|f_w - f_{a_{t_i}y} + f_{a_{t_i}y} - f_{a_{t_i}x} + f_{a_{t_i}x} - f_z\|_{[0,i]}\\
    &\leq 1 + C + 1
    \end{split}
\end{equation}
for every $i \in \N$.
Thus $\|f_z - f_w\|_{\R_{\geq 0}} < \infty$ and the induction hypothesis gives us that $w \in \D(j)$. Thus,
\begin{equation*}
    D(y) \subset \bigcup_{j<k} \D(j).
\end{equation*}
The argument to show that $D(y) \cap \D(k-1) \neq \varnothing$ is similar.
We are then finished in light of the characterization of $\D(k)$ as in \eqref{eq: D(k)-char}.
\end{proof}

\section{Templates}
Our proof of the non-emptiness and, furthermore, of a lower bound on the Hausdorff dimension of the set of $k$-divergent lattices is based on the results appearing in \cite{DFSU}. In order to present and then use their results, we first need to define the notion of templates.

\begin{defn}[Templates]\label{Templates}
For any real interval $I$, we use $I_\Z$ to denote its intersection with $\Z$. 
An $m \times n$ template is a continuous piecewise linear function\footnote{$f$ is piecewise linear means its domain is divided into a locally finite collection of closed intervals with disjoint interiors such that $f$ is linear on each one.} $f: [0,\infty) \xrightarrow[]{} \R^d$ (where $d=m+n$) that satisfies the following properties:
\begin{enumerate}
    \item[(a)] $f_1 \leq \dots \leq f_d$.
    \item[(b)] For all $i = 1,\dots,d$, we have $-\frac{1}{n} \leq f'_i \leq \frac{1}{m}$.
    \item[(c)] For all $j=0,\dots,d$ and for every interval $I$ such that $f_j<f_{j+1}$ we have that the function 
    \begin{equation*}\sum_{0<i \leq j} f_i
    \end{equation*}
    is convex and piecewise linear on $I$ with slopes in the set
    \begin{equation*}
        Z(j) := \left\lbrace \frac{L_+}{m} - \frac{L_-}{n}: L_+ \in [0,m]_{\Z},\ L_- \in [0,n]_\Z,\  L_+ + L_- = j \right\rbrace.
    \end{equation*} 
\end{enumerate}
As a convention we set $f_0=-\infty,f_{d+1}=\infty$.
\end{defn}
Templates are meant to model the log-minima functions of lattices. See \cite[Lemma 32.7]{DFSU} where a standard exterior algebra argument explains the relevance of condition (c) above.
We have the following theorem of fundamental importance:
\begin{thm}[Theorem 4.2 \cite{DFSU}]\label{template-lattice}
For $A \in \text{Mat}_{m\times n}(\R)$ let $f_A$ denote the log-minima function defined for the lattice $x_A$ defined in \eqref{u_A}.
Then there exists a template $f$ such that $f \sim f_A$.
Conversely, given a template $f$, there exists an $A \in \text{Mat}_{m\times n}(\R)$ for which $f_A \sim f$.
\end{thm}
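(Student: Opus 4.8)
The plan is to prove the two implications separately, with the exterior-algebra comparison of \cite[Lemma 32.7]{DFSU} as the common tool: it reduces assertions about the full successive-minima vector of $a_tx_A$ to assertions about the \emph{first} minimum of its exterior powers. For the forward direction, fix $A\in\Mat_{m\times n}(\R)$ and write $f_i(t)=\log\lambda_i(a_tx_A)$. Since $a_t$ acts on $\R^d$ diagonally with exponents in $[-1/n,1/m]$, for any fixed $v\in\R^d$ the function $t\mapsto\log\|a_tv\|$ has slopes in $[-1/n,1/m]$, and on each bounded time window every $f_i$ coincides, up to an additive constant, with a minimum of finitely many such functions --- this is the source of property~(b). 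For $1\le j\le d$ set $g_j(t):=\sum_{i\le j}f_i(t)$. By Minkowski's theorems and \cite[Lemma 32.7]{DFSU}, $g_j(t)$ differs by a bounded amount (depending only on $d$) from $\log\lambda_1\!\big(\bigwedge\nolimits^j(a_tx_A)\big)$, which equals
\[
\min\Big\{\log\big\|(\bigwedge\nolimits^j a_t)\,\omega_\Delta\big\| : \Delta\le x_A\ \text{primitive of rank }j\Big\},
\]
where $\omega_\Delta$ generates $\bigwedge\nolimits^j\Delta$. For each fixed $\Delta$ this inner function is convex and piecewise linear with slopes among the exponents of $\bigwedge\nolimits^j a_t$, and those exponents are precisely the elements of the set $Z(j)$ of Definition~\ref{Templates}; hence $g_j$ is a lower envelope of convex piecewise-linear functions with slopes in $Z(j)$.

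The crux of the forward direction is a separation lemma: on any interval $I$ on which $f_j<f_{j+1}$ with a uniform gap, a single rational subspace $V$ achieves the above minimum throughout $I$, so that on $I$ the function $g_j$ agrees up to a bounded error with the single convex piecewise-linear function $t\mapsto\log\|(\bigwedge\nolimits^j a_t)\omega_{V\cap x_A}\|$. One argues by contradiction: two competing near-minimal primitive rank-$j$ sublattices would, via a span-or-intersection argument in $\bigwedge\nolimits^{j\pm1}\R^d$, force $f_{j+1}-f_j$ (or $f_j-f_{j-1}$) to stay bounded on $I$, contradicting the gap. Granting this, one constructs the template $f$ by replacing each active $g_j$ on each separation interval by its genuine convex piecewise-linear representative, interpolating linearly across the boundedly many transition regions, and setting $f_i:=g_i-g_{i-1}$; then (a) is automatic, (c) holds by construction, (b) follows from the Lipschitz bound, and $f\sim f_A$ since every replacement cost only a bounded error.

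For the converse, one reads off from a template $f$ its combinatorial skeleton --- the breakpoints $0=t_0<t_1<\cdots\to\infty$ and, on each $[t_\ell,t_{\ell+1}]$, the indices $j$ with $f_j<f_{j+1}$ together with the $Z(j)$-slope then carried by $g_j$ --- and produces $A$ as a limit $A=\lim_\ell A^{(\ell)}$ of matrices $A^{(\ell)}\in\Mat_{m\times n}(\R)$ with $f_{A^{(\ell)}}$ agreeing with $f$ on $[0,t_\ell]$ up to an additive constant independent of $\ell$. Passing from $A^{(\ell)}$ to $A^{(\ell+1)}$ amounts to prescribing, at a small scale dictated by $t_{\ell+1}-t_\ell$, the next primitive sublattice that must become minimal; the template axioms --- in particular the slope constraint~(c) --- ensure that the required adjustment of the entries of $A$ is small enough not to disturb the behaviour already fixed on $[0,t_\ell]$ beyond the allowed constant. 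Uniformity of that constant over all $\ell$ is exactly what makes $(A^{(\ell)})$ converge and forces $f_A\sim f$.

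The main obstacle is the separation lemma in the forward direction: upgrading a gap between consecutive minima to genuine, uniform-in-$t$ uniqueness of the minimizing rational subspace is what lets the lower envelopes $g_j$ be treated as honest convex functions. Dually, in the converse the delicate part is the bookkeeping showing that the infinitely many per-step perturbations of $A$ accumulate to only a bounded total error. Both depend on the exact match between the admissible slope set $Z(j)$ and the exponents of $\bigwedge\nolimits^j a_t$, which is what makes templates at once realizable by, and faithful models of, honest log-minima functions.
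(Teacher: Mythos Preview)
This theorem is not proved in the paper at all: it is quoted as Theorem~4.2 of \cite{DFSU} and used as a black box. There is therefore no proof in the paper to compare your proposal against. What you have written is a reasonable high-level outline of the strategy actually carried out in \cite{DFSU} (the exterior-algebra reduction, the convex lower-envelope description of the partial sums $g_j$, and the iterative construction of $A$ for the converse), but it remains a sketch rather than a proof: the ``separation lemma'' you invoke and the bookkeeping that controls the accumulated error in the converse are precisely the substantial technical content of \cite{DFSU}, and you have only named them rather than supplied arguments. For the purposes of this paper, however, a citation is all that is required.
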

\ignore{\begin{rem}
    Given a lattice $x$, we say that a $j$-dimensional subspace $V<\bR^d$ is $x$-rational if it is spanned by vectors from $x$. To an $x$-rational $j$-dimensional subspace $V$ there corresponds  a well defined vector $s_V\in \wedge^j\bR^d$. We call $V$ a shortest subspace
    if $s_V = \min\seta{||s_W||:W \text{ is $x$-rational $j$-dimensional}}$. It is convenient
    to use the $\sup$-norm on $\wedge^j\bR^d$ with respect to the standard basis since then the slopes of the function
    $||a_t s_V||$ are taken from the set $Z(j)$ from \eqref{eq: Z(j)}. 
     The standard way to get a template is to start from a lattice $x$ and to define the 
    $f_j$'s recursively by setting for $1\le j\le d$, $\sum_1^j f_j(x) :=  \|s_V\|$ where $V$ is a shortest $j$-dimensional $x$-rational. 
\end{rem}}
\begin{defn}[Score of a template]\label{score}
Let $f$ be a template and let $I$ be an interval on which it is linear. An \textit{interval of equality} for $f$ on $I$ is a set of integers $(p,q]_\Z$ where $0\leq p < q \leq d$ and where the components of $f$ satisfy
\begin{equation*}
    f_{p}< f_{p+1} = \dots = f_{q} < f_{q+1} \text{ on } I.
\end{equation*}
We define $M_{\pm}(p,q,I)$ to be the unique real numbers satisfying
\begin{equation}
    M_+(p,q,I) + M_-(p,q,I) = q-p\ \text{ and }\ \frac{M_+(p,q,I)}{m} - \frac{M_-(p,q,I)}{n} = \sum_{i=p+1}^q f'_i.
\end{equation}
It follows from part (c) of Definition \ref{Templates} that $M_\pm$ are positive integers. (See also the footnote on page 24 of \cite{DFSU}.)
Further define 
\begin{equation}
    S_+(f,I) := \bigcup \left\lbrace \left(p, p+M_+(p,q,I)\right]_\Z \subset (p,q]_\Z : (p,q]_\Z \text{ is an interval of equality}  \right\rbrace
\end{equation}
and
\begin{equation}
    S_-(f,I):= [1,d]_\Z \smallsetminus S_+(f,I).
\end{equation}
We also have 
\begin{equation}
    \delta(f,I):= \#\left\lbrace (i_+, i_-) \in S_+(f,I)\times S_-(f,I): i_+< i_- \right\rbrace.
\end{equation}
We take $\delta(f,t)$ to be the piecewise constant function with value $\delta(f,I)$ on $I$.
Finally,
\begin{equation}
    \underline{\delta}(f) := \liminf_{T \to \infty} \frac{1}{T} \int_0^T \delta(f,t)dt.
\end{equation}
\end{defn}
We aim to compute the Hausdorff dimension of the set of matrices $A \in \text{Mat}_{m \times n}(\R)$ for which $x_A$ is $k$-divergent. 
Given a template $f$, we define the set
\begin{equation}
    \mathcal{D}(f) := \left\lbrace A \in \text{Mat}_{m\times n}(\R) : f_A \sim f \right\rbrace.
\end{equation}
We say that a collection of templates $\mathcal{F}$ is \textit{closed under finite perturbation} if 
\begin{equation}
    \left(f \in \mathcal{F} \text{ and } f' \text{ is a template with } f'\sim f \right) \implies \left(f' \in \mathcal{F}\right).
\end{equation}
For such a $\mathcal{F}$, we define
\begin{equation}
    \mathcal{D}(\mathcal{F}) := \bigcup_{f \in \mathcal{F}} \mathcal{D}(f) \subset \operatorname{Mat}_{m \times n}(\R).
\end{equation}
A tool to compute dimension is:
\begin{thm}[Theorem 4.3 \cite{DFSU}]\label{DFSU-dimension-thm}
Let $\mathcal{F}$ be a Borel\footnote{Under the compact-open topology.} collection of templates closed under finite perturbation.
Then we have that the Hausdorff dimension of $\mathcal{D}(\mathcal{F})$ is
\begin{equation}
    \sup\left\lbrace \underline{\delta}(f): f \in \mathcal{F}\right\rbrace.
\end{equation}
\end{thm}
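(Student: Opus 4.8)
The plan is to prove the two inequalities $\dim\cD(\cF)\le s$ and $\dim\cD(\cF)\ge s$ separately, where $s:=\sup\{\underline{\delta}(f):f\in\cF\}$. Both directions rest on an \emph{effective} refinement of Theorem~\ref{template-lattice}: the passage from a template to a matrix realizing it up to $\sim$ can be carried out with quantitative control, so that the set of matrices whose log-minima function shadows a given template $f$ within a fixed constant on a window $[0,T]$ is a non-empty union of boxes whose number and side length are predictable from $f$. Throughout, write $\kappa=d/(mn)$ for the common expansion rate of the horospherical directions under $a_t$; it serves as the scale-conversion constant and cancels from the final answer.

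\textbf{Lower bound.} It suffices to show $\dim\cD(f)\ge\underline{\delta}(f)$ for each fixed $f\in\cF$, since $\cD(f)\subseteq\cD(\cF)$. Choose times $0=T_0<T_1<\cdots\to\infty$ with $T_{j+1}-T_j\to\infty$ but $T_{j+1}/T_j\to1$, and construct inductively a tree of boxes in $\operatorname{Mat}_{m\times n}(\R)$ as follows: a level-$j$ box $B$ has side $\approx e^{-\kappa T_j}$, every $A\in B$ satisfies $\|f_A-f\|_{[0,T_j]}\le C$ for a fixed constant $C$, and $B$ splits into $N_B$ level-$(j+1)$ boxes of side $\approx e^{-\kappa T_{j+1}}$ which continue to shadow $f$ on $[0,T_{j+1}]$, with $\log N_B=\kappa\int_{T_j}^{T_{j+1}}\delta(f,t)\,dt+o(T_{j+1})$. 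The exponent is correct because the directions in which $A$ must be pinned, in order to keep the successive minima of $a_tu_A\Z^d$ running along $f$, are exactly those counted by the pairing of $S_-(f,t)$ against $S_+(f,t)$ --- this is the meaning of $\delta$. Intersecting the tree gives a compact $K\subseteq\cD(f)$; equipping $K$ with the measure that divides mass uniformly among children and invoking the mass distribution principle yields
\[
\dim_H K\ \ge\ \liminf_{j\to\infty}\frac{\sum_{i<j}\log N_i}{\kappa T_j}\ =\ \liminf_{T\to\infty}\frac1T\int_0^T\delta(f,t)\,dt\ =\ \underline{\delta}(f).
\]
Taking the supremum over $f\in\cF$ gives $\dim\cD(\cF)\ge s$.

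\textbf{Upper bound.} Countable stability of Hausdorff dimension is not directly available, as $\cF$ may be uncountable, so one covers $\cD(\cF)$ all at once, using that it is contained in the set $\cS_{\le s}$ of matrices whose associated template has score at most $s$. Fix $\veps>0$. To $A$ (with $f_A\sim g$) and a time $T$, attach the combinatorial record on $[0,T]$: the function sending $t$ to the flag of primitive $u_A\Z^d$-rational subspaces realizing the successive minima of $a_tu_A\Z^d$, which is piecewise constant with finitely many jumps. Two claims are needed: (i) if $\underline{\delta}(g)\le s$, then for a suitably chosen large $T$ the number of possible records is $\le\exp((\kappa s+\veps)T)$, which is forced by conditions (a)--(c) of Definition~\ref{Templates} together with the definition of $\delta$; and (ii) two matrices in $\cS_{\le s}$ sharing a record on $[0,T]$ are $O(e^{-\kappa T})$-apart, because a larger perturbation of $A$ would, under the $\kappa$-rate dilation inherent in $a_tu_\bullet$, change the minima-realizing flag before time $T$. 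Granting these, $\cD(\cF)$ is covered by $\le\exp((\kappa s+\veps)T)$ balls of radius $O(e^{-\kappa T})$, so the sum of $(s+\veps/\kappa)$-th powers of the radii stays bounded as $T\to\infty$; hence $\dim\cD(\cF)\le s+\veps/\kappa$, and $\veps\downarrow0$ finishes. A careful choice of the covering times is what turns the $\limsup$ appearing naively into the $\liminf$ that defines $\underline{\delta}$.

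\textbf{Main obstacle.} The crux is the counting claim (i): a bound, \emph{uniform over all templates of score at most $s$}, of $\exp((\kappa s+o(1))T)$ on the number of admissible successive-minima combinatorial histories on a window $[0,T]$. Here one must genuinely convert the slope restrictions in condition (c) and the combinatorial weight $\delta(f,I)=\#\{(i_+,i_-):i_+\in S_+,\ i_-\in S_-,\ i_+<i_-\}$ into an entropy estimate for the branching of the minima-realizing flag, controlling the error terms near the (finitely many per window, but template-dependent) jump times, and --- the real difficulty --- reconciling the $\liminf$ in $\underline{\delta}(f)$ with the fact that $\frac1T\int_0^T\delta(f,t)\,dt$ need not converge, so that no single $T$ is simultaneously good for all templates. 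Once the effective form of Theorem~\ref{template-lattice} is available, the lower bound is comparatively routine.
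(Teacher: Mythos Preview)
This theorem is not proved in the paper at all: it is quoted verbatim as Theorem~4.3 of \cite{DFSU} and used as a black box to compute the Hausdorff dimension of $\cD(\cF^k)$. The paper's own contribution is the construction of the templates $f^{(k)}$ and the computation of their scores $\underline{\delta}(f^{(k)})$, not a proof of the variational principle itself. So there is no ``paper's own proof'' to compare your sketch against.

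That said, your outline is a reasonable summary of the strategy that \cite{DFSU} actually employs (tree construction plus mass distribution for the lower bound, a combinatorial covering argument for the upper bound), and you have correctly identified the real difficulty: the uniform entropy bound in your claim~(i), and in particular the reconciliation of the $\liminf$ in the definition of $\underline{\delta}$ with the lack of a single time $T$ that is good for all templates simultaneously. Your sketch does not resolve this --- nor could it in a paragraph, since in \cite{DFSU} this point occupies a substantial part of the argument and requires a careful ``game'' formulation rather than a direct covering --- so as written this is an outline of a proof strategy, not a proof. For the purposes of the present paper, however, no proof is required; you should simply cite \cite{DFSU}.
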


\section{Existence of $k$-divergent lattices}

Our goal is to build, for each $k \in \N$, an example of a template $f^{(k)}$ giving rise to $k$-divergent lattices via Theorem \ref{template-lattice}. 
Following this, we apply Theorem \ref{DFSU-dimension-thm} to the equivalence class of templates
\begin{equation}
    \mathcal{F}^k := \left\lbrace f \in C(\R_{\geq 0}, \R^d) : f \text{ is a template with } f\sim f^{(k)} \right\rbrace.
\end{equation}

We have the following definition which is to be the input for the inductive procedure giving rise to the templates $f^{(k)}$.
\begin{defn}[Linked Templates]
    A linked template is a pair $(f, \mathcal{I})$ where $f$ is a template and $$\mathcal{I} = \{I_p:=[b_p,b_{p+1}]: p \in \N\}$$
     is a collection of intervals satisfying
     \begin{equation*}
         b_1 =0,\  b_p \nearrow \infty\ \text{ and }\ f(b_p) = 0\in \R^d \text{ for all } p \in \N.
     \end{equation*}
\end{defn}

Our first example of a linked template is built with the following block:
\begin{defn}\label{standard-template}
We define a piecewise linear function $g:[0,m+n] \to \R^d$ by specifying its coordinates.
We have
\begin{equation}
    g_1(t) = \begin{cases} -\frac{t}{n} & \text{ if }\ 0\leq t \leq n
     \\
     \frac{t-(m+n)}{m} & \text{ if }\ n \leq t \leq m+n
     \end{cases}
\end{equation}
and
\begin{equation}
    g_2(t) = \begin{cases} \frac{t}{n(m+n-1)} & \text{ if }\ 0\leq t \leq n
     \\
     \frac{m+n-t}{m(m+n-1)} & \text{ if }\ n \leq t \leq m+n.
     \end{cases}
\end{equation}
We set $g_3 = \dots = g_{m+n}=g_2$ (if $d>2$).
We leave to the reader the straightforward check that $g:= (g_i)_{i=1,\dots, d}$ satisfies the template axioms  of Definition \ref{Templates} on its domain.
\end{defn}
This example of a linked template gives rise to a template associated to $1$-divergent lattices.
\begin{exmp}[$1$-divergent lattice template]\label{1div-template}
We define a family of closed intervals $$\mathcal{I} =  \{I_p \subset [0,\infty): p \in \N \}$$ inductively. Let $I_1 = [0,d]$ and let $I_p$ have length $pd$ and be contiguous to $I_{p-1}$.

For each $p \in \N$, let $\psi_p$ be the orientation preserving linear bijection $I_p \to [0,d]$ and let $\phi_p: \R^d\to \R^d$ be the linear map which scales vectors by the factor $p$.
Define
\begin{equation}\label{1div-blocks}
    f_{p} := \phi_p \circ g \circ \psi_p :I_p \to \R
\end{equation}
where $g$ is as in Definition \ref{standard-template}. Since $f_{p}$ takes the value $0 \in \R^d$ on both endpoints of $I_p$, we define $f:[0,\infty) \to \R^d$ to be the unique continuous function which  restricts to $f_{p}$ on each $I_p$.
Since the derivatives of $\psi_p$ and $\phi_p$ are $p^{-1}$ and $pI$ respectively, we see that $f$ is a template.
Note that with respect to part (c) of Definition \ref{Templates}, the components of $f$ satisfy inequalities only on intervals avoiding the endpoints of the $I_p$.

Moreover, by construction, $(f, \mathcal{I})$ is a linked template.
In the case when $m=2$ and $n=1$ we have the following graph of $f$.
\begin{center}\label{graph}
    \includegraphics[scale=0.6]{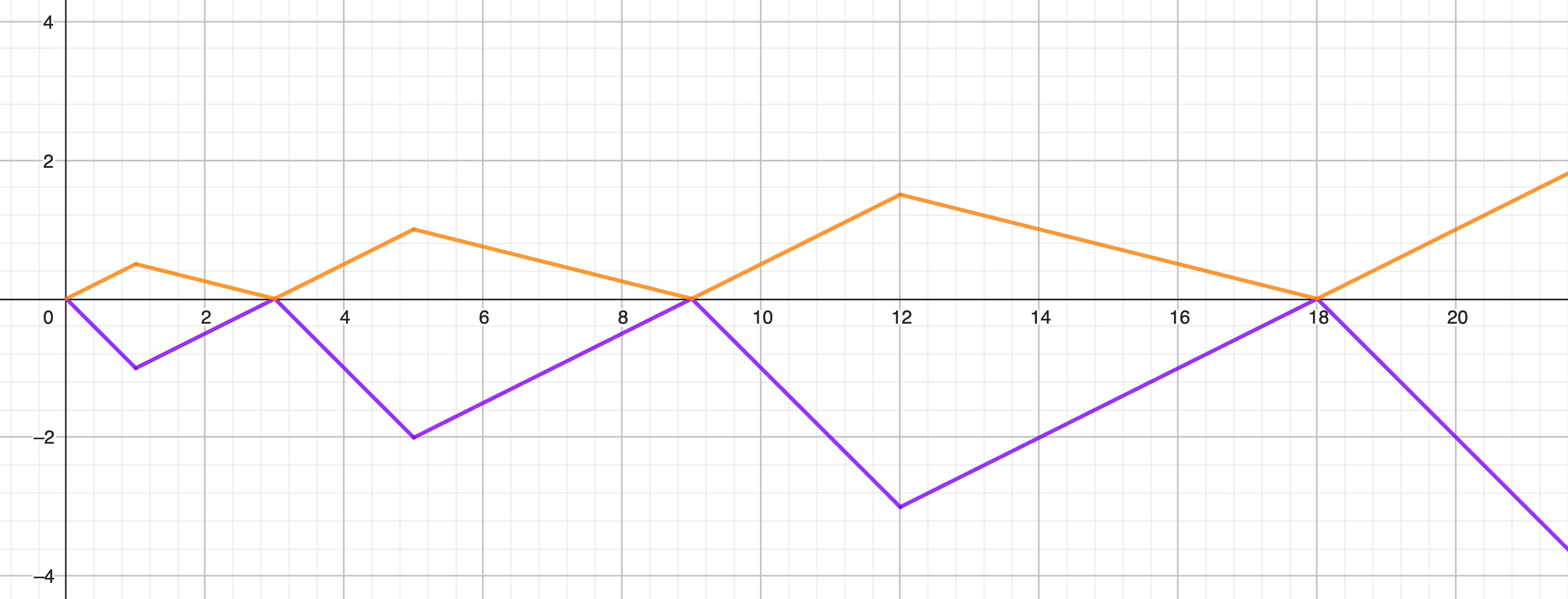}
\end{center}
 The purple graph is of $f_1$ and the orange graph is of $f_2 = f_3$.
\end{exmp}

\begin{thm}\label{1template-is-actually-1template}
Fix the template $f:[0,\infty) \to \R^d$ as in Example \ref{1div-template}. Let $x \in X$ be a lattice with log-minima function $f_x$.
Assume further that $f_x \sim f$. Then $x \in \D(1)$.
\end{thm}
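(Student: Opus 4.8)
The plan is to show two things: first, that $x$ is not $0$-divergent, i.e.\ the orbit $(a_t x)_{t \ge 0}$ is not divergent; and second, that every asymptotic accumulation point $y \in D(x)$ is $0$-divergent, i.e.\ divergent. Together with the characterization \eqref{eq: D(k)-char} this gives $x \in \D(1)$. By Proposition~\ref{k-div-invariance-fp} it suffices to carry out the analysis using the template $f$ itself in place of $f_x$, since $f_x \sim f$; so throughout I will read off the dynamical behaviour of $a_t x$ from the coordinates of $f$, using Lemma~\ref{Mahlerc} as the basic translation tool (the orbit visits a compact set near time $t$ precisely when $f_1(t)$ is bounded below).

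For the first point, observe that by construction $f(b_p) = 0$ at the left endpoints $b_p$ of the blocks $I_p$, and in particular $f_1(b_p) = 0$ for all $p$, so $\limsup_{t\to\infty} f_1(t) = 0 > -\infty$; by Lemma~\ref{Mahlerc} the sequence $(a_{b_p} x)$ has a convergent subsequence, its limit lies in $D(x)$, so $D(x) \ne \varnothing$ and $x \notin \D(0)$. For the second point — the heart of the argument — I must understand all possible subsequential limits $y = \lim a_{t_n} x$. Passing to a subsequence, I can assume the "phase" of $t_n$ within its block converges: writing $t_n \in I_{p_n}$ with $p_n \to \infty$ (the case $p_n$ bounded is impossible since then $t_n$ is bounded), the normalized position $\psi_{p_n}(t_n) \to s \in [0,d]$. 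The key structural feature of $f$ is that on the block $I_p$ it equals $\phi_p \circ g \circ \psi_p$, so $f_1$ at the interior of $I_p$ is $p$ times a function of the normalized coordinate, and this scaling factor $p \to \infty$. The claim is that the log-minima function of the limit lattice $y$, up to the equivalence $\sim$, looks like a rescaled and recentered copy of $g$ extended by the appropriate linear continuation — concretely, $f_y$ is $\sim$ to a template which is eventually linear with all slopes equal (coming from the linear pieces of $g$ away from its corner), hence $f_y$ either tends to $-\infty$ (if the relevant slope of $g$ at the limiting phase is negative, after accounting for the direction of the flow) forcing $y \in \D(0)$, or — and this is the case I must rule out or absorb — it could a priori return to bounded values. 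The point is that because the scaling factors $p_n$ diverge, only the behaviour of $g$ in an infinitesimal neighborhood of the limiting phase $s$ survives in the limit, and since $g_1$ has a single corner at $t = n$ with strictly negative slope $-1/n$ before it and strictly positive slope $1/m$ after it, for $s \ne n$ the function $t \mapsto f_1(t_n + t)$ (suitably interpreted after applying $a_{t_n}$) is, in the limit, linear with a nonzero slope on a half-line, and whichever sign it has, the relevant one-sided limit of $f_{y,1}$ along $a_t y$ is $-\infty$, giving $y \in \D(0)$ by Lemma~\ref{Mahlerc}; the boundary phases $s \in \{0, n, d\}$ (where $g$ vanishes or has its corner) need separate treatment but by the linked-template structure $f(b_p)=0$ these correspond to limits that are again governed by a full block of $g$, handled the same way.

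The main obstacle is making rigorous the passage "only an infinitesimal neighborhood of the limiting phase survives, so $f_y$ is asymptotically linear." This requires a uniform statement: for $t_n$ deep in block $I_{p_n}$, the function $s \mapsto f(t_n + s) - f(t_n)$ on $s \in [0, S]$ (for fixed $S$, eventually inside $I_{p_n}$ since $|I_{p_n}| = p_n d \to \infty$) converges, as $n \to \infty$, to a linear function $s \mapsto \ell s$ where $\ell$ is the one-sided derivative of $g$ at the phase $s$, scaled appropriately — because $f$ restricted to $I_{p_n}$ is $p_n \cdot (g \circ \psi_{p_n})$ and $\psi_{p_n}$ contracts by $1/(p_n d)$, so $f(t_n + s) - f(t_n) = p_n [ g(\psi_{p_n}(t_n) + s/(p_n d)) - g(\psi_{p_n}(t_n)) ]$, and as $p_n \to \infty$ the increment $s/(p_n d) \to 0$ while the prefactor $p_n$ compensates, yielding in the limit $(s/d) \cdot g'(s_\infty)$ by differentiability of $g$ away from its corner. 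Then $f_{a_{t_n} x}(s) = f(t_n + s)$ up to the $\sim$-bounded error, so $f_y(s) = \lim f(t_n + s)$ differs from a linear function by a bounded amount; since every coordinate of $g'$ away from $t = n$ is a fixed nonzero constant with a definite sign (namely $g_1' \in \{-1/n, 1/m\}$), $f_{y,1}(s) \to -\infty$ as $s \to \infty$ in at least the backward or forward direction appropriate to Lemma~\ref{Mahlerc}, so $y \in \D(0)$. Once this linearization lemma is in hand the rest is bookkeeping over the finitely many limiting-phase cases, and the conclusion $D(x) \subseteq \D(0)$ combined with $x \notin \D(0)$ yields $x \in \D(1)$ via \eqref{eq: D(k)-char}.
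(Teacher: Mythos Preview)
Your phase-analysis approach is genuinely different from the paper's proof, which is worth noting. The paper never tracks where $t_n$ sits inside its block. Instead it isolates a single uniform property of the template (Claim~\ref{1Div-claim}): whenever $f_1(t) > -M$, then shortly afterwards $f_1$ drops below $-M$ and stays there for a stretch of length $T_0$, for arbitrary $T_0$, provided $t$ is large. This directly forces $\log\lambda_1(a_s y) \to -\infty$ for every accumulation point $y$, with no case analysis and no passage to phases. That argument is shorter and, more importantly, is the one that feeds cleanly into the inductive proof of Theorem~\ref{existence-of-kdiv}.

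Your argument can be made to work, but as written it is organized around the wrong case. You treat an interior limiting phase $s \in (0,d) \smallsetminus \{n\}$ as generic, but this case is \emph{vacuous}: since $g_1 < 0$ strictly on $(0,d)$ and the scaling factor $p_n \to \infty$, any interior phase forces $f_1(t_n) = p_n\, g_1(\psi_{p_n}(t_n)) \to -\infty$, contradicting convergence of $a_{t_n}x$ via Lemma~\ref{Mahlerc}. So the only phases that actually arise are $s_0 \in \{0,d\}$, which you relegate to a one-line remark. Moreover, the sentence ``whichever sign it has, the relevant one-sided limit of $f_{y,1}$ along $a_t y$ is $-\infty$'' is not correct for a forward semiflow: if the local slope were $+1/m$ (phase near $d$), forward linearity alone would not give divergence. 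What saves you there is that after a bounded time the orbit crosses into $I_{p_n+1}$, where the slope becomes $-1/n$ again; this block-crossing is the real content of the boundary case and needs to be made explicit. (Also, $\psi_{p_n}$ has derivative $1/p_n$, not $1/(p_n d)$.) Once you restrict to $s_0 \in \{0,d\}$ and handle the crossing at $s_0 = d$, your argument goes through.
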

\begin{proof}
Fix a constant $C$ such that $\|f_x -f\|_{\R_{\geq 0}} \leq C$.
This and Lemma \ref{Mahlerc} show that $D(x)$ is nonempty. In order to show $x \in \D(1)$, we show that any $y \in D(x)$ has a divergent orbit. So let $y \in D(x)$ and let $f_y$ be the log-minima function of $y$.
Again, using the continuity of the action, we can find a sequence of times $(t_i)_{i \in \N}$ going to infinity such that
\begin{equation}\label{atix-close-y}
    \|f_{a_{t_i}x} - f_y\|_{[0,i]} \leq 1.
\end{equation}
Here $f_{a_{t_i}x}$ denotes the log-minima function of the lattice $a_{t_i}x$.
Recall now that $d=m+n$ is the dimension of the ambient space of $x$.
\begin{claim}\label{1Div-claim}
Fix any $M>0$.
Given any $T_0>0$ we have, for all sufficiently large $t$,
\begin{equation}\label{x-leaves-K-for-long-after-finite-time}
    \left(\log(\lambda_1(a_tx)) > -M\right) \implies \left(\sup \left\lbrace \log(\lambda_1(a_{t+s} x)) : s \in  [Md, Md + T_0]\right\rbrace  \leq -M\right)
\end{equation}
\end{claim}
\begin{proof}
Let $f_1$ denote the first component of $f$. 
By the very definition of $f$, we see that the claim (with $M$ replaced by $M+C$) holds for $f_1$.
Since $\|f_x -f\|_{\R_{\geq 0}} < C$, we see that the claim holds for the function $t\mapsto \log(\lambda_1(a_tx))$ as well.
\end{proof}
We now show that 
\begin{equation*}
    \lim_{t\to \infty} \log(\lambda_1(a_ty)) = -\infty,
\end{equation*}
that is, $y$ has divergent orbit.
Let $M>0$ be any positive constant large enough so that $\log(\lambda_1(y)) > -(M-1)$ and let $T_0>0$ be arbitrary. 
By \eqref{atix-close-y} we see that, for every $i \in \N$,
\begin{equation*}
    \log(\lambda_1(a_{t_i} x)) > -M.
\end{equation*}
We apply claim \ref{1Div-claim} and see that, for all sufficiently large $i \in \N$, 
\begin{equation*}
    \log(\lambda_1(a_{t_i+s}) x)  \leq -M \text{ for every } s \in [Md,Md+T_0].
\end{equation*}
Another application of \eqref{atix-close-y} gives that
\begin{equation*}
\log(\lambda_1(a_sy)) \leq -(M-1) \text{ for all } s \in [Md, Md+T_0].
\end{equation*}
Since $T_0$ was arbitrary, this proves the required divergence property of the function $t\mapsto \log(\lambda_1(a_ty))$.
\end{proof}
\ignore{The equivalence $f_x \sim f$ and Mahler's compactness criterion show that $D(x)$ is nonempty. Let $y$ be a lattice in $D(x)$.
Let $f_y$ be the log-minima function of $y$. Denote the first coordinate of $f_y$ by ${f_y}^{(1)}$ (i.e the log of $\lambda_1(a_{t}y)$). We show that ${f_y}^{(1)}(t)$ goes to $-\infty$ when $t \xrightarrow[]{} \infty$. Assume on the contrary that there exists some compact set $K$ in the space of lattices, and an infinite sequence $t_n$, such that $a_{t_n}y \in K$ We can also take $K$ to be bigger and assume w.l.o.g that $y \in K$ (and even in the interior of $K$), and that $K= \brace z: \lambda_{1}(z)> \delta \brace$ (Using Mahler's criterion).  Now since $y \in D(x)$ there exists a sequence $t_s$ of times such that $a_{t_s}x \xrightarrow[]{} y$. Now, for every $n$ big enoug, we can find some $s$ big enough, such that $a_{t_s}x$ is so close to $y$, so that $a_{t_n}a_{t_s}x$ and $a_{t_n}y$ remain close. Now, since $y$ lies in the interior of $K$ and $a_{t_s}x$ is close to it, we have $a_{t_s}x \in K$. If we translate this to the language of first minima functions, this is equivalent to saying ${f_{x}}^{(1)}(t_s)<log(\delta)$. Now since $f_x \sim f$, there exists some constant $C>0$ such that $|f_x - f|<C$. Hence $f^{(1)}(t_s)<log(\delta)+C$. From the construction of $f$, we know that given any pair $r \leq R$, there exists some $L$, such that for every $l>L$ there exists $T_l$ such that for every $t>T_l$ if $f^{1}(t)<r$, then $f^{1}(t+L)>R$. We now apply this fact to $f$ with $r=log(\delta)+C,R=r=log(\delta)+2C$ and take $L=t_n$ (choosing $n$ to be big enough w.r.t $K$) and $t_s>T_{t_t}$ (choosing $s$ to be big enough w.r.t $n$). Now, we obtain that since $f^{(1)}(t_s)<log(\delta)+C$ then $f^{(1)}(t_s+t_n)>log(\delta)+2C$, and therefore $f_{x_{t_s}}^{(1)}(t_n)=f_{x}^{(1)}(t_s+t_n)>log(\delta)+C$ Hence $f_y ^{(1)}(t_n)>log(\delta)$, contradicting our assumption that $a_{t_n}y \in K=\brace z:\lambda_1(z)<\delta \brace$. 
Hence our template $f$ is 1-divergent and we are done.}
We define an operator $\Phi$ from the set of linked templates to itself, the iterates of which will help us in our inductive construction. The following definition should be read as 
a guided exercise and the reader should fill in the details.
\begin{defn}\label{marked-operator}
    We set
\begin{equation*}
    \Phi(f,\mathcal{I}) := (\widetilde{f}, \widetilde{\mathcal{I}})
\end{equation*}
where $\widetilde{f}$ and $\widetilde{\mathcal{I}}$ are defined as follows:
Let $\widetilde{I}_1:= I_1$ and, assuming we have defined 
\begin{equation*}
    \widetilde{I}_1 = [c_1,c_2], \widetilde{I}_2 = [c_2,c_3],\dots, \widetilde{I}_{q} = [c_q,c_{q+1}],
\end{equation*}
we set
\begin{equation*}
    \widetilde{I}_{q+1} := c_{q+1} + \left(I_1 \cup I_2 \cup \dots \cup I_q \cup I_{q+1}\right).
\end{equation*}
Here is a representation:
\begin{center}
    \includegraphics[scale=0.2]{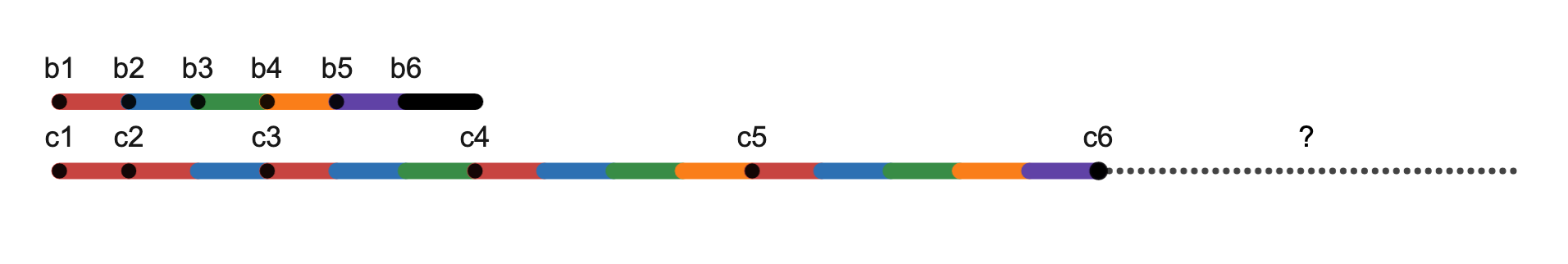}
\end{center}
Can you guess the colors that go in the empty bin?

We now define $\widetilde{f}: \R_{\geq 0} \to \R^d$ to be the unique function which satisfies the property that, for every $q \in \N$ and $t \in \widetilde{I}_{q}$,
\begin{equation}\label{fk-formula}
    \widetilde{f}(t) = f(t-c_{q}).
\end{equation}
This function $\widetilde{f}$ is continuous by the hypothesis that $f$ takes value $0 \in \R^d$ on the endpoints of $I_p$.
The template axioms can be easily checked by using the fact that $f$ itself is a template. Note that with respect to part $(c)$ of Definition \ref{Templates}, if for some $j$, $\widetilde{f}_j < \widetilde{f}_{j+1}$ on an interval $I$, the linked template definition implies that $I \subset \widetilde{I}_q$ for some $q \in \N$.
Moreover, the same property holds for $f$.

The pair $(\widetilde{f}, \widetilde{I})$ is another linked template.
\end{defn}
We now construct putative examples of templates associated to $k$-divergent lattices.
\begin{exmp}[$k$-divergent lattice template]\label{k-div-template}
Let $(f^{(1)},\mathcal{I}^{(1)})$ be our linked template from example \ref{1div-template}.
 Define
 \begin{equation*}
     (f^{(k)},\mathcal{I}^{(k)}) := \Phi^k (f^{(1)}, \mathcal{I}^{(1)}).
 \end{equation*}
\end{exmp}
\begin{thm}\label{existence-of-kdiv}
Fix $k \in \N$. 
Let $x \in X_d$ be a lattice with log-minima function $f_x$. Assume further the equivalence $f_x \sim f^{(k)}$ where $f^{(k)}$ is constructed as in Example \ref{k-div-template}.
Then $x \in \D(k)$.
\end{thm}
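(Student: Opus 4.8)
The plan is to prove Theorem~\ref{existence-of-kdiv} by induction on $k$, with the base case $k=1$ being exactly Theorem~\ref{1template-is-actually-1template}. The inductive step should extract from the structure of $\Phi$ the two facts that together with \eqref{eq: D(k)-char} characterize membership in $\D(k)$: (i) every asymptotic accumulation point $y \in D(x)$ satisfies $y \in \bigcup_{j<k}\D(j)$, and moreover $D(x)$ meets $\D(k-1)$; and (ii) $x$ itself is not $j$-divergent for any $j<k$. By Proposition~\ref{k-div-invariance-fp} it suffices to prove these claims for one particular lattice $x$ with $f_x \sim f^{(k)}$, and in fact we may argue directly at the level of the template $f^{(k)}$ and transfer everything with the fixed constant $C$ bounding $\|f_x - f^{(k)}\|_{\R_{\geq 0}}$, exactly as in the proof of Theorem~\ref{1template-is-actually-1template}.

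The key structural observation is how $\Phi$ acts: $\widetilde{f}$ restricted to the block $\widetilde{I}_q = [c_q, c_{q+1}]$ is a translated copy of $f$ on $[0, c_{q+1}-c_q] = I_1\cup\dots\cup I_q$, i.e.\ $\widetilde f(t) = f(t-c_q)$. Thus if we take an accumulation sequence $a_{t_i} x \to y$ with $t_i \to \infty$, we can reparametrize: writing $t_i = c_{q_i} + s_i$ for the block $\widetilde I_{q_i}$ containing $t_i$, the point $y$ is forced (up to the bounded error $C$ and a further error $1$ coming from continuity of the action, as in \eqref{atny-nclose-w}) to have log-minima function that, on every bounded interval, agrees with a translate of $f^{(k)}$ restricted to some tail of one of its blocks; since each block of $f^{(k)}$ is itself (a translate of) $f^{(k-1)}$ on an initial segment — this is the content of $\Phi^{k} = \Phi\circ\Phi^{k-1}$ — one concludes $f_y$ is, on $\R_{\geq 0}$, within bounded distance of $f^{(k-1)}$ or of a template obtained from it by a shift that still satisfies $f_y \sim f^{(k-1)}$ (or of a degenerate/divergent template if $s_i$ lies deep inside a block, forcing $\log\lambda_1 \to -\infty$). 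The induction hypothesis (via Proposition~\ref{k-div-invariance-fp}) then gives $y \in \D(j)$ with $j \le k-1$, and by choosing $t_i$ at the right breakpoints $c_{q_i}$ one produces a $y$ with $f_y \sim f^{(k-1)}$, hence $y \in \D(k-1)$, so $D(x)$ meets $\D(k-1)$.

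The remaining point, that $x \notin \bigcup_{j<k}\D(j)$, i.e.\ $x$ is not itself $(k-1)$-divergent or lower, follows the template of Claim~\ref{1Div-claim}: the first component $f^{(k)}_1$ returns to $0$ at every breakpoint $c_q$ (the linked-template property $f^{(k)}(c_q)=0$), so $\limsup_{t\to\infty}\log\lambda_1(a_t x) \ge -C > -\infty$, whence $D(x) \ne \varnothing$ and $x\notin\D(0)$; and more quantitatively the blocks grow without bound in length while $f^{(k)}_1$ dips arbitrarily deep within each, so one can build an accumulation sequence producing a $y \in D(x)$ lying at "level $k-1$" and not lower — preventing $x$ from being $j$-divergent for $j\le k-1$. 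Combining with (i) and \eqref{eq: D(k)-char} gives $x\in\D(k)$.

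The main obstacle I anticipate is bookkeeping the reparametrization cleanly: one must show that an arbitrary accumulation sequence $t_i\to\infty$, after passing to a subsequence, yields a limit $y$ whose log-minima function is \emph{exactly} equivalent (bounded distance on all of $\R_{\geq 0}$, not just on compact sets) to $f^{(j)}$ for some $j<k$ — the subtlety being that the ``phase'' $s_i$ within the block $\widetilde I_{q_i}$ may itself tend to infinity, in which case one must argue that the relevant initial segment of the block, which is a scaled copy of $f^{(k-1)}$, is long enough that $y$ still sees a full equivalent copy of $f^{(k-1)}$ (or else $\log\lambda_1(a_ty)\to-\infty$ and $y\in\D(0)$). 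This requires using that the blocks $I_p$ of $f^{(1)}$ have lengths $pd\to\infty$, and that this unboundedness propagates under $\Phi$, so that no matter where within a block the sequence accumulates, the limit is governed by a genuine (equivalence class of a) template from the previous stage. I would isolate this as a lemma about $\Phi$ before running the induction.
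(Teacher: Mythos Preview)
Your plan matches the paper's proof: induction on $k$ with base case Theorem~\ref{1template-is-actually-1template}, the block identity $\widetilde f(t)=f(t-c_q)$ from \eqref{fk-formula}, the dichotomy on whether the phase $s_i:=t_i-c_{q_i}$ stays bounded or not, and the choice $t_i=c_{q_i}$ to exhibit a point of $D(x)\cap\D(k-1)$. So the architecture is right.

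The one place where your sketch is off is the unbounded-phase case. You suggest the outcome is either ``$f_y\sim f^{(k-1)}$ up to a shift'' or ``$\log\lambda_1(a_ty)\to-\infty$''. Neither is correct as stated: an unbounded shift of $f^{(k-1)}$ is \emph{not} in general $\sim f^{(k-1)}$, and the very existence of the limit $y\in X_d$ forces $\|f^{(k-1)}(s_i)\|$ to stay bounded, so the divergent alternative never occurs. The paper resolves this case not by unwinding $\Phi$ a second time, but by invoking Theorem~\ref{template-lattice} to produce an auxiliary lattice $z$ with $f_z\sim f^{(k-1)}$; by induction $z\in\D(k-1)$. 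Boundedness of $f^{(k-1)}(s_i)$ and Lemma~\ref{Mahlerc} then give a subsequential limit $w$ of $a_{s_i}z$, so $w\in D(z)\subset\bigcup_{j<k-1}\D(j)$. The estimate $\|f_y(t)-f^{(k-1)}(s_i+t)\|\le 1+C$ on $[0,i]$ yields $f_y\sim f_w$, and Proposition~\ref{k-div-invariance-fp} puts $y$ in the same $\D(j)$ as $w$. This single appeal to the induction hypothesis via an auxiliary lattice replaces the recursive ``lemma about $\Phi$'' you were anticipating, and is the main idea your outline is missing.
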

\begin{proof}
We proceed by induction on $k$ with the base step having already been proved in Theorem \ref{1template-is-actually-1template}. 
Thus we can assume $k>1$. Let $C$ be a constant witnessing $f_x \sim f^{(k)}$.
Let $y$ belong to $D(x)$. 
As before, we can find a sequence of positive unbounded times $(t_i)_{i \in \N}$ with 
\begin{equation}\label{k-div-y-iclose}
    \|f_{a_{t_i}x} - f_y\|_{[0,i]} < 1.
\end{equation}
Again, $f_{a_{t_i}x}$ denotes the log-minima function of the lattice $a_{t_i}x$.

Denote $(f^{(k-1)}, \mathcal{I}^{(k-1)})$ by $$(f, \{I_p:=[b_p,b_{p+1}]: p \in \N\})$$
and $(f^{(k)}, \mathcal{I}^{(k)})$  by 
\begin{equation*}
    (\widetilde{f}, \{\widetilde{I}_q:=[c_q,c_{q+1}]: q \in \N\}).
\end{equation*}
For each $i \in \N$, let $q_i$ be the unique integer such that $t_i \in [c_{q_i}, c_{q_i +1})$. Assume for now that
\begin{equation}\label{in-a-single-J}
    [t_i, t_i +i] \subset [c_{q_i},c_{q_i+1}].
\end{equation} 
Consider the differences $(t_i - c_{q_i})$ and the two possibilities:
\begin{enumerate}
    \item[(a)] The sequence $(t_i - c_{q_i})_{i \in \N}$ has a bounded subsequence. Without loss of generality, assume the sequence itself is bounded. We compute that for each $i\in\N$ and $t \in [0,i]$,
    \begin{equation}\label{y-k-1-div-computation}
        \begin{split}
            \|f_y(t) - f(t)\| &\leq \|f_y(t) - f_{a_{t_i}x}(t)\| + \| f_x(t_i +t) - f(t)\|\\
            &= 1 + \| f_x(t_i +t) - \widetilde{f}(t_i +t)\| + \|\widetilde{f}(t_i+t) -f(t)\|\\
            &\leq 1 + C + \|\widetilde{f}(t_i +t) - f(t)\|\\
            &= 1+C + \|f(t_i - c_{q_i} + t) - f(t)\|,
        \end{split}
    \end{equation}
    where the last equality follows from using the assumption of \eqref{in-a-single-J}
    and the defining property of $\Phi$ as in \eqref{fk-formula}.
    Since $f$ is continuous and piecewise linear with derivatives of its components bounded in absolute value by $\max\left\lbrace\frac{1}{m}, \frac{1}{n}\right\rbrace$, and since $(t_i - c_{q_i})_{i \in \N}$ is a bounded sequence, we see that the last term in \eqref{y-k-1-div-computation} has a uniform bound over all $i \in \N$.
    This shows that $f_y \sim f^{(k-1)}$ with the induction hypothesis kicking in to give
    \begin{equation*}
        y \in \D(k-1).
    \end{equation*}
    \item[(b)] The sequence $(t_i - c_{q_i})_{i \in \N}$ diverges to infinity. For each $i$ and $t \in [0,i]$, we get  the following chain of inequalities
    \begin{equation}\label{y-<k-1-div-computation}
    \begin{split}
     \|f_y(t) - f(t_i - c_{q_i} + t)\| 
         &\leq 1 + \|f_x(t_i + t) - f(t_i - c_{q_i} + t) \| \\
        &\leq 1 + C + \|\widetilde{f}(t_i +t) - f(t_i - c_{q_i} + t) \| \\
        & = 1 + C,
    \end{split}
    \end{equation}
    where the last equality follows from the assumption in \eqref{in-a-single-J} and the defining property in \eqref{fk-formula}.
    Since $f$ is a template, Theorem \ref{template-lattice} gives the existence of a lattice $z$ for which $f_z \sim f (= f^{(k-1)})$.
    The induction hypothesis implies that $z \in \D(k-1)$. 
    The computation in \eqref{y-<k-1-div-computation} shows that 
    \begin{equation*}
        \|f(t_i-c_{q_i})\| \geq \|f_y(0)\| - (1+C) \text{ for all } i \in \N.
    \end{equation*}
    Thus, the equivalence $f_z\sim f$ and Lemma \ref{Mahlerc} shows that the sequence of lattices $a_{(t_i -d_{q_i-1})}z$ has a convergent subsequence. Let $w \in D(z)$ be the limit.
    The computation \eqref{y-<k-1-div-computation} shows that that $f_y \sim f_w$.
    Theorem \ref{k-div-invariance-fp} then shows that 
    \begin{equation}
        y \in  \bigcup_{j<k-1} \D(j),
    \end{equation}
    since $w$ lies in the same set.
\end{enumerate}
If \eqref{in-a-single-J} fails to hold, we can assume without loss of generality that, there exists $s \in \R$ such that
\begin{equation*}
    |t_i - c_{q_i +1}| < s \text{ for all } i \in \N.
\end{equation*}
But then we can replace $t_i$ by $t_i + s$ and $y$ by $a_s y$ to return to the situation of \eqref{in-a-single-J}. Note we use here that the divergence properties of $y$ are invariant under the flow.

Summing up, we have shown that 
\begin{equation}
    D(x) \subset \bigcup_{j<k} \D(j),
\end{equation}
and it remains to show that $D(x) \cap \D(k-1)$ is nonempty.

And this is straightforward: let $(t_i)_{i \in \N}$ be the sequence of initial points of the intervals $\widetilde{I}_q$, that is $(c_q)_{q \in \N}$.
Since $f^{(k)}$ takes value $0 \in \R^d$ at these points, Mahler's compactness shows that $(a_{t_i}x)_{i \in \N}$ has a convergent subsequence. Call the limit $y$.
By construction, the log-minima function $f_y$ is equivalent to $f^{(k-1)}$.
The induction hypothesis shows that $y \in \D(k-1)$ and we are done.
\end{proof}
\section{Computations for a lower bound on dimension}
We can now turn to computing the Hausdorff dimension of $\D(k)$.
Let $\mathcal{F}^k$ be the set of templates which are equivalent to $f^{(k)}$. It is clearly closed under finite perturbations and Borel in the compact-open topology on $C(\R_{\geq 0},\R^d)$.
Each $A \in \mathcal{D}(\mathcal{F}^k)$ gives rise to a lattice $x_A$ which is necessarily in $\D(k)$, by Theorem \ref{existence-of-kdiv}.
Thus, in light of Proposition \ref{weak-stable-invariance}, computing the dimension of $\mathcal{D}(\mathcal{F}^k) \subset \text{Mat}_{m\times n} (\R)$ will lead to a lower bound for the dimension of $\D(k) \subset X_d$.
\begin{lem}
The score $\underline{\delta}(f^{(k)})$ is equal to $\delta_{m,n}$ which is defined by
\begin{equation}\label{eq:deltamn}
    \delta_{m,n} := mn - \frac{mn}{m+n}.
\end{equation}
\end{lem}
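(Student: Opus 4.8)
The plan is to reduce $\underline{\delta}(f^{(k)})$ to a single computation with the block function $g$ of Definition~\ref{standard-template}. First I would observe, by induction on $k$ using the defining formula \eqref{fk-formula} for $\Phi$, that for every $k$ the function $f^{(k)}$ is a concatenation of countably many ``atoms'', each atom being a translate of $\phi_p\circ g\circ\psi_p$ for some $p\in\N$ (in the notation of Example~\ref{1div-template}, where $\phi_p$ dilates $\R^d$ by the factor $p$ and $\psi_p$ reparametrizes time by the factor $p^{-1}$). This holds for $f^{(1)}$ by construction, and $\Phi$ merely translates and re-concatenates atoms already produced. The point of this normalization is that on each atom the composite $\phi_p\circ g\circ\psi_p$ has \emph{exactly the same coordinate slopes} as $g$ (the dilation factor $p$ cancels the time factor $p^{-1}$) and the same pattern of equalities among its coordinates at corresponding times. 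Since, by Definition~\ref{score}, the integrand $\delta(f,t)$ depends only on the slopes $f_i'(t)$ and on the partition of $\{1,\dots,d\}$ into maximal runs of coordinates with equal value $f_i(t)$, it follows that for $t$ in the interior of an atom governed by $\psi_p$ one has $\delta(f^{(k)},t)=\delta(g,\psi_p(t))$. Thus, away from a measure-zero set (the endpoints of the atoms and the preimage of $t=n$ inside each atom), $t\mapsto\delta(f^{(k)},t)$ is, on each atom, a time-rescaled copy of the single function $\delta(g,\cdot)$ on $[0,m+n]$.

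Next I would compute $\delta(g,\cdot)$ directly. On each of the two open intervals $(0,n)$ and $(n,m+n)$ the function $g$ is linear and has the constant equality pattern $g_1<g_2=\dots=g_d$, so on each it has exactly the two intervals of equality $(0,1]_\Z$ and $(1,d]_\Z$. Solving the two linear equations defining $M_\pm$ in Definition~\ref{score} gives: on $(0,n)$, $M_+(0,1,\cdot)=0$ and $M_+(1,d,\cdot)=m$, hence $S_+=\{2,\dots,m+1\}$, $S_-=\{1\}\cup\{m+2,\dots,m+n\}$ and $\delta(g,\cdot)=m(n-1)$; while on $(n,m+n)$, $M_+(0,1,\cdot)=1$ and $M_+(1,d,\cdot)=m-1$, hence $S_+=\{1,\dots,m\}$, $S_-=\{m+1,\dots,m+n\}$ and $\delta(g,\cdot)=mn$. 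Therefore
\begin{equation*}
\frac{1}{m+n}\int_0^{m+n}\delta(g,s)\,ds=\frac{n\cdot m(n-1)+m\cdot mn}{m+n}=\frac{mn(m+n-1)}{m+n}=mn-\frac{mn}{m+n}=\delta_{m,n}.
\end{equation*}

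Finally, the change of variables $s=\psi_p(t)$ shows that $\int_{\mathrm{atom}}\delta(f^{(k)},t)\,dt$ equals the length of that atom times $\delta_{m,n}$. Summing over the complete atoms contained in $[0,T]$, and using that $\delta(f^{(k)},\cdot)$ is bounded by $d^2$ while the single incomplete atom at the right end of $[0,T]$ has length $o(T)$ — because the atom lengths grow sublinearly in the cumulative length, already for $f^{(1)}$ (the atom $I_p$ has length $pd$ and begins at $b_p\asymp dp^2/2$) and only more so after applying $\Phi$ — one obtains $\frac1T\int_0^T\delta(f^{(k)},t)\,dt\to\delta_{m,n}$, whence $\underline{\delta}(f^{(k)})=\delta_{m,n}$.

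The genuine content lies entirely in the second paragraph: correctly identifying the intervals of equality of $g$ on the two linear pieces, solving for the integers $M_\pm(p,q,\cdot)$, and counting the pairs in $S_+\times S_-$. The reduction to $g$ via the structure of $\Phi$, the scale-invariance of $\delta$, and the Cesàro averaging in the last paragraph are all structural and routine; the only mild care needed there is the elementary estimate that the atom lengths are sublinear in the cumulative length, which guarantees that the $\liminf$ defining $\underline{\delta}$ is in fact a limit.
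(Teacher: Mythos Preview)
Your proof is correct and follows essentially the same approach as the paper: the computation of $\delta$ on the two linear pieces of the block $g$ is identical, and your averaging argument via the bottom-level atoms is just a minor reorganization of the paper's induction over the level-$k$ intervals $J_p$. The paper handles the tail sublinearity (your ``and only more so after applying $\Phi$'') a bit more explicitly via the limit \eqref{add-insignificant-k}, but the content is the same.
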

\begin{proof}
We note that the first part of this computation has already been done in \cite[Figure 4]{DFSU}.
We proceed by induction and begin with the case $k=1$. We must contend with the function $\delta(f^{(1)},t)$ and thus also the quantities $S_+, M_+$ and $M_-$ from Definition \ref{score}.
Since $f^{(1)}$ is defined by repeating the same pattern over larger and larger scales, it suffices to compute $\delta(f^{(1)}, t)$ over the interval $[0,d]$.
First consider the interval $K_1 := [0,n]$. There are two intervals of equality here, $(0, 1]_\Z$ and $(1,d]_\Z$.
Solving the system of equations
\begin{equation*}
    M_+(0,1,K_1) + M_-(0,1,K_1) = 1\ \text{ and }\ \frac{M_+(0,1,K_1)}{m} - \frac{M_-(0,1,K_1)}{n} = -\frac{1}{n},
\end{equation*}
we see that $M_+(0,1,K_1) = 0$.
Solving
\begin{equation*}
    M_+(1,d,K_1) + M_-(1,d,K_1) = d-1\ \text{ and }\ \frac{M_+(1,d,K_1)}{m} - \frac{M_-(1,d,K_1)}{n} = \frac{1}{n},
\end{equation*}
we see that $M_+(1,d,K_1) = m$.
We thus have
\begin{equation*}
    S_+(f^{(1)}, K_1)  = [2,m+1]_\Z\ \text{ and }\ S_-(f^{(1)}, K_1) = 
    \{1\} \cup [m+2, m+n]_\Z.
\end{equation*}
This shows that
\begin{equation}\label{delta-0n}
    \delta(f^{(1)}, t) = m(n-1)\ \text{ for }\ t \in [0,n].
\end{equation}
We pause here to note that the interval $[m+2, m+n]_\Z$ could be empty; for example, in the case when $m=n=1$.

Now consider the interval $K_2:=[n,d]$. We again have two intervals of equality, $(0,1]_\Z$ and $(1,d]_\Z$.
One can perfom a similar computation as above to see that $M_+(0,1,K_2) = 1$ and $M_+(1,d,K_2) = m-1$.
We thus have
\begin{equation*}
    S_+(f^{(1)}, K_2)  = [1,m]_\Z\ \text{ and }\ S_-(f^{(1)}, K_2) = 
     [m+1, m+n]_\Z.
\end{equation*}
This shows that
\begin{equation}\label{delta-nd}
    \delta(f^{(1)}, t) = mn\ \text{ for }\ t \in [n,d].
\end{equation}
Recall that $f^{(1)}$ was defined over a specific collection of intervals which we now call
\begin{equation*}
    \left\lbrace I_p = [b_p, b_{p+1}]: p \in \N \right\rbrace. \end{equation*}
    Each $I_p$ has length $pd$.
    The definition in \eqref{1div-blocks} and equations \eqref{delta-0n} and \eqref{delta-nd} show that, for any $q \in \N$,
    \begin{equation}
        \int_{I_q} \delta(f^{(1)},t) dt = qn (mn - m) + qm (mn).
    \end{equation}
    Thus, for any $T \in I_p$ with $p >1$, we see that 
    \begin{equation}\label{score-calculation-1}
    \begin{split}
        \frac{1}{T}\int_0^T \delta(f^{(1)}, t) dt &=  \frac{1}{T} \left( \int_0^{b_p} \delta(f^{(1)},t) dt + \int^T_{b_p}\delta(f^{(1)},t) dt\right)
        \\ 
        &=  \frac{1}{T}  \int_0^{b_p} \delta(f^{(1)},t) dt + \frac{1}{T} \int^{T}_{b_p} \delta(f^{(1)}, t) dt
        \\
        &= \frac{1}{T} \sum_{q=1}^{p-1} \left(qmn^2 + qm^2n - qmn  \right) + \frac{1}{T} \int_{b_p}^T \delta(f^{(1)},t)dt
        \\
        &= \frac{mn^2 + m^2n - mn}{d}\cdot \frac{1}{T} \sum_{q=1}^{p-1} qd + \frac{1}{T} \int_{b_p}^T \delta(f^{(1)},t)dt
        \\
        &= \delta_{m,n} \cdot \frac{1}{T} \sum_{q=1}^{p-1} qd + \frac{1}{T} \int_{b_p}^T \delta(f^{(1)},t)dt.
        \end{split} 
    \end{equation}
Since
\begin{equation}\label{add-insignificant}
    \frac{\text{length}\left(\bigcup^{p-1}_{q=1}\ I_q\right)}{\text{length}\left(\bigcup^{p}_{q=1}\ I_q\right)} = \frac{p-1}{p+1}
\end{equation}
and since $\delta(f^{(1)},t)$ is a bounded function,
computation \eqref{score-calculation-1} shows that
\begin{equation}
    \lim_{T\to \infty} \frac{1}{T}\int_0^T \delta(f^{(1)}, t) dt = \delta_{m,n}.
\end{equation}

Now we come to $f^{(k)}$ where $k>1$. Assume the specified decomposition of its domain is denoted by
\begin{equation*}
    \{J_p =[c_p,c_{p+1}]: p \in \N\}.
\end{equation*}
First, by recalling the formula of \eqref{fk-formula}, one proves inductively on $k$ that
\begin{equation}
    \int_{J_p} \delta(f^{(k)},t) dt = \text{length}(J_p) \left(\frac{n}{m+n} (mn-m) + \frac{m}{m+n} (mn)\right).
\end{equation}
Second, we leave it to the reader as an exercise to prove the limit
\begin{equation}\label{add-insignificant-k}
    \lim_{p\to \infty}\ 
    \frac{\text{length}\left(\bigcup^{p-1}_{q=1}\ J_q\right)}{\text{length}\left(\bigcup^{p}_{q=1}\ J_q\right)}
    = 1
\end{equation}
using induction on $k$, with the base step being seen by \eqref{add-insignificant}.
Hint: the induction hypothesis guarantees, for any $i \in \N$, that
\begin{equation*}
    \lim_{p \to \infty} \frac{\operatorname{length}(J_{p})}{\operatorname{length}(J_{p+i})} = 1.
\end{equation*}
We can now take $T \in J_p = [c_p, c_{p+1}]$ and then compute, exactly as in \eqref{score-calculation-1},
\begin{equation}
    \frac{1}{T} \int_0^T \delta(f^{(k)},t) dt = \delta_{m,n} \cdot\frac{\text{length}\left(\bigcup_{q=1}^{p-1} J_q\right)}{T} + \frac{1}{T} \int_{c_p}^T \delta(f^{(k)},t) dt.
\end{equation}
Equation \eqref{add-insignificant-k} then shows that 
\begin{equation*}
    \lim_{T\to \infty} \frac{1}{T} \int_0^T \delta(f^{(k)}, t) dt = \delta_{m,n}.
\end{equation*}
This completes the proof.
\end{proof}
Theorem \ref{DFSU-dimension-thm} and the above score computation immediately yields:
\begin{cor}\label{cor-lower bound 1}
The dimension of $\mathcal{D}(\mathcal{F}^k) \subset \text{Mat}_{m\times n}(\R)$ is bounded below by $\delta_{m,n}$.
\end{cor}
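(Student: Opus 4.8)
The plan is to invoke Theorem~\ref{DFSU-dimension-thm} directly, with $\mathcal{F}^k$ as the collection of templates. First I would check that $\mathcal{F}^k$ meets the hypotheses of that theorem: by its very definition as a single equivalence class under $\sim$, the collection $\mathcal{F}^k$ is closed under finite perturbation, and (as already noted) it is Borel --- indeed closed --- in the compact-open topology on $C(\R_{\geq 0},\R^d)$. Hence Theorem~\ref{DFSU-dimension-thm} applies and yields
\begin{equation*}
  \dim \mathcal{D}(\mathcal{F}^k) \;=\; \sup\left\lbrace \underline{\delta}(f) : f \in \mathcal{F}^k \right\rbrace .
\end{equation*}

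Next I would exhibit a single member of $\mathcal{F}^k$ that realizes the value $\delta_{m,n}$. The function $f^{(k)}$ of Example~\ref{k-div-template} is a genuine template --- this follows by induction on $k$ from the stability of the template axioms under the operator $\Phi$ recorded in Definition~\ref{marked-operator} --- and $f^{(k)}\sim f^{(k)}$ trivially, so $f^{(k)}\in\mathcal{F}^k$. By the preceding lemma, $\underline{\delta}(f^{(k)}) = \delta_{m,n}$. Plugging this into the displayed equality shows that $\sup\{\underline{\delta}(f): f\in\mathcal{F}^k\}\ge\delta_{m,n}$, which is precisely the asserted lower bound for $\dim\mathcal{D}(\mathcal{F}^k)\subset\operatorname{Mat}_{m\times n}(\R)$.

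I do not expect any real obstacle: the corollary is a formal consequence of the dimension formula of \cite{DFSU} together with the score computation just carried out, and the genuinely substantive ingredients --- that $f^{(k)}$ is a template whose equivalence class consists of log-minima functions of $k$-divergent lattices (Theorem~\ref{existence-of-kdiv}), and that its lower average score equals $\delta_{m,n}$ (the lemma above) --- have already been established. The only point meriting a moment's care is verifying membership $f^{(k)}\in\mathcal{F}^k$, i.e.\ that $f^{(k)}$ is a bona fide template rather than merely a continuous piecewise linear function; once this is in hand the proof is a one-line application of Theorem~\ref{DFSU-dimension-thm}. (This estimate is then upgraded, via Proposition~\ref{weak-stable-invariance}, to the desired lower bound on $\dim\D(k)\subset X_d$ in the subsequent corollary.)
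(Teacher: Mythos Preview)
Your proposal is correct and matches the paper's own approach exactly: the paper states that the corollary follows immediately from Theorem~\ref{DFSU-dimension-thm} together with the preceding score computation $\underline{\delta}(f^{(k)})=\delta_{m,n}$, and you have simply spelled out the verification of the hypotheses (closure under finite perturbation, Borel, and $f^{(k)}\in\mathcal{F}^k$) that the paper leaves implicit.
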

\begin{cor}\label{lower-bound-cor}
Let $k \in \N$. The dimension of $\D(k) \subset X_d$ is bounded below by
\begin{equation}
    \dim(X_d) - \frac{mn}{m+n}.
\end{equation}
\end{cor}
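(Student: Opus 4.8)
The plan is to deduce Corollary~\ref{lower-bound-cor} from Corollary~\ref{cor-lower bound 1} by transporting the lower bound on the Hausdorff dimension of $\mathcal{D}(\mathcal{F}^k) \subset \operatorname{Mat}_{m\times n}(\R)$ to a lower bound on the dimension of $\D(k) \subset X_d$. First I would recall that $\mathcal{D}(\mathcal{F}^k) = \{A : f_A \sim f \text{ for some template } f \sim f^{(k)}\} = \{A : f_A \sim f^{(k)}\}$, and that by Theorem~\ref{existence-of-kdiv} every such $A$ yields a lattice $x_A = u_A \Z^d \in \D(k)$. Therefore the injective map $A \mapsto x_A$ sends $\mathcal{D}(\mathcal{F}^k)$ into $\D(k)$.

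The next step is to observe that this map is \emph{locally bi-Lipschitz}. Indeed, $A \mapsto u_A$ is a polynomial (in fact affine-linear in the entries of $A$) embedding of $\operatorname{Mat}_{m\times n}(\R) \cong \R^{mn}$ onto the subgroup $H^+ \subset G$, and the orbit map $g \mapsto g\Z^d$ from $G$ to $X_d$ is a local diffeomorphism; composing, $A \mapsto x_A$ is a smooth embedding of $\R^{mn}$ into $X_d$, hence bi-Lipschitz on compact sets. Since Hausdorff dimension cannot decrease under a locally bi-Lipschitz map, we conclude
\begin{equation*}
    \dim(\D(k)) \;\geq\; \dim(\{x_A : A \in \mathcal{D}(\mathcal{F}^k)\}) \;=\; \dim(\mathcal{D}(\mathcal{F}^k)) \;\geq\; \delta_{m,n} = mn - \frac{mn}{m+n}.
\end{equation*}

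It remains to convert $\delta_{m,n}$ into the stated expression $\dim(X_d) - \frac{mn}{m+n}$. Here I would use the invariance properties proved in Section~2. By Proposition~\ref{weak-stable-invariance}, $\D(k)$ is invariant under left multiplication by $H^- H^0$; together with the $H^+$-coordinate already accounted for (the $x_A$ themselves range over an $mn$-dimensional family inside $\D(k)$), one sees that $\D(k)$ contains, locally around any $x_A$, a bi-Lipschitz copy of an open set in $H^- \times H^0 \times \mathcal{D}(\mathcal{F}^k)$. The group $H^0$ consists of block-diagonal matrices in $\SL_d$, so $\dim H^0 = (m^2 - 1) + (n^2 - 1) + 1 = m^2 + n^2 - 1$ (the single $+1$ coming from the $\SL$ constraint coupling the two blocks), and $\dim H^- = mn$. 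Adding these contributions to the $\delta_{m,n}$ already obtained in the $H^+$-direction gives
\begin{equation*}
    \dim(\D(k)) \;\geq\; mn + (m^2 + n^2 - 1) + \Big(mn - \frac{mn}{m+n}\Big) \;=\; (m+n)^2 - 1 - \frac{mn}{m+n} \;=\; \dim(X_d) - \frac{mn}{m+n},
\end{equation*}
since $\dim(X_d) = \dim \SL_d(\R) = d^2 - 1 = (m+n)^2 - 1$.

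I expect the main obstacle to be bookkeeping rather than conceptual: one must verify that the three families ($H^-$-translation, $H^0$-translation, and the matrices $A$) are genuinely transverse, so that their product contributes additively to the dimension, and that the relevant map is bi-Lipschitz on a set of positive measure in the product (not merely a countable union of lower-dimensional pieces). The transversality is clear at the Lie-algebra level from the direct sum decomposition $\gog = \goh^+ \oplus \goh^- \oplus \goh^0$ recorded before Proposition~\ref{weak-stable-invariance}, and the product map $H^- \times H^0 \times H^+ \to G$ is a diffeomorphism onto an open neighborhood of the identity (the ``big cell''), so the estimate localizes cleanly. One should also note that the dimension formula requires only a \emph{lower} bound, so it suffices to exhibit one such bi-Lipschitz chart; no delicate covering argument is needed in this direction.
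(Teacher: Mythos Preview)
Your proposal is correct and follows essentially the same route as the paper: use Theorem~\ref{existence-of-kdiv} to see that $\{x_A:A\in\mathcal{D}(\mathcal{F}^k)\}\subset \D(k)\cap H^+\Z^d$, invoke Proposition~\ref{weak-stable-invariance} to get $H^-H^0$-invariance of $\D(k)$, and then use the local product structure $\gog=\goh^+\oplus\goh^-\oplus\goh^0$ to add the dimensions $\dim(H^-H^0)=d^2-1-mn$ to the bound $\delta_{m,n}$ from Corollary~\ref{cor-lower bound 1}. The paper is terser about the bi-Lipschitz/transversality step, simply noting that locally $G$ is a metric product of neighborhoods in $H^-,H^0,H^+$, but the content is identical.
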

\begin{proof}
By Proposition \ref{weak-stable-invariance} we have
\begin{equation*}
    \D(k) = H^- H^{0} \D(k).
\end{equation*}  
As a consequence, since locally $G$ is a metric product of small neighbourhoods of the identity
in the groups $H^-, H^0, H^+$, it follows that 
\begin{equation*}
    \dim \D(k) = \dim (\D(k) \cap H^+ \Z^d) + \dim (H^- H^0).
\end{equation*}
Corollary \ref{cor-lower bound 1} says that  $\delta_{m,n}$ is a lower bound for the dimension 
of $\D(k) \cap H^+ \Z^d$ and so we see that $\delta_{m,n} + \dim (H^{-} H^0)$ is a lower bound for $\D(k)$. Since 
\begin{equation*}
    \dim(X_d) = d^2 - 1\ \text{ and }\ \dim(H^-H^0) = d^2 - 1 - mn,
\end{equation*}
we are done.
\end{proof}

\section{Full escape of mass and an upper bound on dimension}

Let $\mathcal{M}(X_d)$ and $\mathcal{P}(X_d)$ denote the set of Borel complex measures on $X_d$ and the set of Borel probability measures on $X_d$ respectively. Every open set in $X_d$ is $\sigma$-compact and so, if we consider the $C^*$-algebra of continuous complex-valued functions vanishing at infinity, $C_0(X_d)$, the Riesz Representation theorem \cite[Theorem 6.19]{Ru} asserts that $\mathcal{M}(X_d)$ is in bijection with the continuous dual $C_0(X_d)^{\vee}$, via
\begin{equation}
    \nu \mapsto \left(f\mapsto \int f d\nu\right).
\end{equation}
We induce the weak$^*$-topology on $\mathcal{M}(X_d)$ and the corresponding subset topology on $\mathcal{P}(X_d)$.
Given $x\in X_d$, we define the family of probabilities $\{\mu_{x,T} \in \mathcal{P}(X_d): T>0\}$ by
\begin{equation}\label{mu_T}
    \int f d\mu_{x,T} := \frac{1}{T}\int_0^T f(a_tx) dt
\end{equation}
where $f \in C_0(X_d)$.
The following notion of divergence on average is crucial for us:
\begin{defn}[cf. \cite{KKLM} Theorem 1.1]
    We say $x \in X_d$ is divergent on average if
    \begin{equation*}
        \lim_{T \to \infty} \mu_{x,T} = 0\ \text{ in } \mathcal{M}(X_d).
    \end{equation*}
\end{defn}
\begin{thm}\label{escape}
Fix $k \in \N$. Let $x\in \D(k)$. Then, $x$ is divergent on average.
\end{thm}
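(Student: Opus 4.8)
The goal is to show that if $x\in\D(k)$ then $\mu_{x,T}\to 0$ weak$^*$ as $T\to\infty$. The plan is to reduce to the key claim that $x$ spends almost all of its time near infinity: more precisely, that for every compact $K\subset X_d$,
\begin{equation*}
    \lim_{T\to\infty}\frac{1}{T}\,\big|\{t\in[0,T]:a_tx\in K\}\big| = 0.
\end{equation*}
Once this is established, weak$^*$ convergence to $0$ follows immediately, since for $f\in C_0(X_d)$ and $\varepsilon>0$ one picks a compact $K$ with $|f|<\varepsilon$ off $K$, bounds $\big|\int f\,d\mu_{x,T}\big|$ by $\|f\|_\infty\cdot\frac1T|\{t\le T: a_tx\in K\}| + \varepsilon$, and lets $T\to\infty$.

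To prove the time-of-return estimate I would argue by contradiction: suppose there is a compact $K$ and a sequence $T_j\to\infty$ with $\frac{1}{T_j}|\{t\in[0,T_j]:a_tx\in K\}|\ge c>0$. Enlarging $K$ via Mahler's criterion (Lemma~\ref{Mahlerc}) we may take $K=\{y\in X_d:\log\lambda_1(y)\ge -M\}$ for some $M>0$. The idea is that a positive-density set of return times to $K$, together with the characterization of $\D(k)$, forces the existence of an accumulation sequence that is too long. Concretely, a positive-density subset of $[0,T_j]$ on which $a_tx\in K$ must contain return times $t$ tending to infinity, so $D(x)\neq\varnothing$; more quantitatively, one can extract a sequence $t_i\to\infty$ along which $a_{t_i}x$ converges to some $y\in D(x)\subset\bigcup_{j<k}\D(j)$, and the fact that the returns happen with positive frequency (not merely infinitely often) should be leveraged to show that $y$ itself is recurrent along a positive-density set of times — but a point that is $j$-divergent for finite $j$ is, as noted in the Remark after Definition~\ref{def: k-div}, not recurrent, giving a contradiction. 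The cleanest route is an induction on $k$: for $k=0$, $x$ divergent means $\log\lambda_1(a_tx)\to-\infty$, so $a_tx\in K$ only for $t$ in a bounded set, and the density is $0$ trivially. For the inductive step, I would use Proposition~\ref{k-div-invariance-fp} together with the template description to transfer the positive-density return statement from $x$ to one of its accumulation points, which is $(k-1)$-divergent, contradicting the inductive hypothesis applied to that point (after checking that positive upper density of returns for $x$ implies the same for a suitable accumulation point).

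The main obstacle I anticipate is precisely this last transfer: upgrading "$a_{t_i}x$ converges to $y$ along a density-$c$ set of times" to a genuine positive-density recurrence statement for $y$, since a priori the limit $y$ depends on the subsequence and different returns could accumulate at different points. The way I would handle this is to fix a finite cover of $K$ by small balls, pass to one ball $B$ that captures a positive density of the return times, and then use the (uniform) continuity of the flow on the compact set $K$ over bounded time windows: if $a_{t_i}x$ and $a_{t_{i'}}x$ both lie in the small ball $B$, then $a_{t_i}x$ and its "shifted" version track each other for a long time, which via Lemma~\ref{Mahlerc} and Proposition~\ref{k-div-invariance-fp} pins the limit down to a single $j$-divergent point $y$ and shows $y$ returns to a compact neighborhood with density $\ge c$ — contradicting that $y$, being $j$-divergent for $j<k\le$ finite, satisfies $y\notin D(y)$ and hence (by the $k=0$ base case of this very statement, or directly) has return density $0$. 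Filling in the quantitative bookkeeping in this step is where the real work lies; everything else is soft.
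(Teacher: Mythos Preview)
Your overall strategy is the same as the paper's: argue by contradiction, land on an accumulation point $y\in D(x)\subset\bigcup_{j<k}\D(j)$, and get a contradiction from the fact that a $j$-divergent point with $j$ finite cannot be recurrent. The reduction from weak$^*$ convergence to the ``time spent in $K$'' statement is also fine.

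Where you part ways with the paper is exactly at the step you yourself flag as the obstacle. Your induction needs to transfer a positive-density-of-returns statement from $x$ to some specific accumulation point $y$, and the covering/tracking sketch you give is both delicate and, as you admit, incomplete. The paper bypasses this entirely with one observation you are missing: any weak$^*$ limit $\mu$ of the empirical measures $\mu_{x,T_i}$ is automatically invariant under the full flow $(a_t)_{t\in\R}$, because the Ces\`aro averaging kills the $O(s/T_i)$ boundary terms when one shifts by a fixed time $s$. If $\mu\neq 0$, normalize it to an invariant probability $\nu$; Poincar\'e recurrence then produces, for free, a recurrent point $y\in\text{supp}(\nu)\subset D(x)$. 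But $y\in\D(j)$ for some finite $j<k$ forces $y\notin D(y)$, contradicting recurrence. No induction on $k$, no density transfer, no ball covers.

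In short: right destination, but you are effectively trying to reprove Poincar\'e recurrence by hand for this particular orbit, which is why the ``bookkeeping'' looks hard. Once you recall that weak$^*$ limits of orbit averages along a flow are flow-invariant, your entire transfer step collapses to a one-line invocation of Poincar\'e recurrence.
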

\begin{proof}
Note that the Banach-Aloaglu theorem implies that every sequence $(\mu_{x,T_i})_{i \in \N}$ has a limit point in $\mathcal{M}(X_d)$.
Thus, to prove the theorem it suffices to show that for any sequence $(T_i)_{i \in \N}$ of divergent times, 
\begin{equation}
   \left( \lim_{i \to \infty}\mu_{x,T_i} = \mu\ \text{ in } \mathcal{M}(X_d) \right) \implies \left(\mu = 0\right).
\end{equation}
Note that the limit measure $\mu$ in the equation above is necessarily non-negative, and also invariant under the action of $(a_t)_{t \in \R}$.
Thus, if we have a limit $\mu$ which is nonzero, we may scale to obtain an $(a_t)_{t \in \R}$-invariant probability measure $\nu$.
Assume we have such a $\nu$ for sake of contradiction.

First observe that $\text{supp}(\nu) = \text{supp}(\mu)$ and so any $y \in \text{supp}(\nu)$  must belong to $D(x)$. In particular, there must be some $j<k$ for which
\begin{equation*}
    y \in \D(j).
\end{equation*}

On the other hand, we claim that we can find a $y \in \text{supp}(\nu)$ for which there exists a sequence of unbounded positive times $(t_i)_{i \in \N}$ with 
\begin{equation}\label{y-recurrent}
    \lim_{t \to \infty} a_{t_i}y = y.
\end{equation}
To see this, consider a countable base for the topology of $X_d$, $\{U_l \subset X_d: l \in \N\}$, and consider the sets
\begin{equation*}
    A_l := \left\lbrace z \in U_l : a_{j} z \notin U_l \text{ for all } j \in \N \right\rbrace.
\end{equation*}
Poincare recurrence shows that $\nu\left(\cup A_l\right) = 0$.

Thus, we can choose any $y \in \text{supp}(\nu) \smallsetminus \cup A_l$ and get the claim in \eqref{y-recurrent}.
This contradicts the fact that $y \in \D(j)$.
\end{proof}
Applying \cite[Theorem 1.1]{KKLM}, we immediately get:
\begin{cor}\label{upper-bound-cor}
For each $k \in \N$, the Hausdorff dimension of $\D(k)$ is bounded above by
\begin{equation*}
    \dim(X_d) - \frac{mn}{m+n}.
\end{equation*}
\end{cor}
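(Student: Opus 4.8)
The plan is to deduce Corollary~\ref{upper-bound-cor} directly from Theorem~\ref{escape} combined with the main theorem of \cite{KKLM}. First I would recall the precise statement of \cite[Theorem 1.1]{KKLM}: the set of lattices $x \in X_d$ which are divergent on average under the flow $(a_t)_{t \ge 0}$ has Hausdorff dimension at most $\dim(X_d) - \frac{mn}{m+n}$. (Strictly, the cited theorem is often phrased for the set of matrices $A \in \operatorname{Mat}_{m \times n}(\R)$ with $x_A$ divergent on average, bounding its dimension by $mn - \frac{mn}{m+n} = \delta_{m,n}$; one then upgrades to $X_d$ using the same local product structure argument as in Corollary~\ref{lower-bound-cor}, namely that $G$ is locally a metric product of neighbourhoods in $H^-, H^0, H^+$ and that divergence on average is invariant under $H^- H^0$ by the computation in the proof of Proposition~\ref{weak-stable-invariance}, so $\dim(\{x : x \text{ div. on avg.}\}) = \dim(\{A : x_A \text{ div. on avg.}\}) + \dim(H^- H^0)$.)

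Next, Theorem~\ref{escape} asserts the inclusion
\begin{equation*}
    \D(k) \subset \left\lbrace x \in X_d : x \text{ is divergent on average} \right\rbrace
\end{equation*}
for every $k \in \N$. Since Hausdorff dimension is monotone under inclusion, combining this inclusion with the \cite{KKLM} bound yields
\begin{equation*}
    \dim(\D(k)) \le \dim\left(\left\lbrace x \in X_d : x \text{ divergent on average}\right\rbrace\right) \le \dim(X_d) - \frac{mn}{m+n},
\end{equation*}
which is exactly the claimed upper bound. That is the entire argument; no further estimation is needed once Theorem~\ref{escape} is in hand.

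Since the real content has already been extracted into Theorem~\ref{escape}, the proof here is a one-line citation-and-monotonicity argument, and there is essentially no obstacle. The only point requiring a little care is bookkeeping: making sure the normalization of the \cite{KKLM} bound matches — i.e.\ whether it is stated for $X_d$ or for $\operatorname{Mat}_{m \times n}(\R)$ — and, if the latter, invoking the local product decomposition exactly as in the proof of Corollary~\ref{lower-bound-cor} to transfer the bound from the space of matrices to $X_d$. With that accounted for, the corollary follows immediately.
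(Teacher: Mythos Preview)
Your proposal is correct and matches the paper's approach exactly: the paper likewise deduces the corollary in one line by combining Theorem~\ref{escape} with \cite[Theorem~1.1]{KKLM}. Your extra remark about reconciling the matrix versus lattice formulation via the local product structure is a reasonable elaboration, but the paper simply cites \cite{KKLM} without spelling this out.
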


\subsection*{Acknowledgements}
The second, third and fourth name authors acknowledge generous support from the European Research Council (ERC) under the European Union’s Horizon 2020 Research and Innovation Program, Grant agreement no.\ 754475.
The authors thank the organizers of the 2022 conference `Group Actions, Geometry and Dynamics' in Ohalo, Israel where this collaboration began. We also thank Tushar Das for taking the time to answer some questions.

\bibliographystyle{alpha}
\bibliography{main}

\end{document}

\end{document}

\begin{lem}\label{k-template-property}
Fix any $M>0$ and $k \in \N$ with $k>1$.
Let $f_1$ denote the first component of $f^{(k)}$.
Let $I^{(k)}_q$ denote the intervals occurring in the construction of $f^{(k)}$ as in \ref{k-div-template}.
Given any $T_0>0$ we have, for all sufficiently large $t$, that
\begin{equation}
    \left(f_1(t) > -M\right) \implies \left(t+Mm+[0,T_0] \subset I^{(k)}_q \text{ for some single } q \in \N\right)
\end{equation}
\end{lem}
\begin{proof}
We leave it to the reader to check, by a simple induction on $k$, that each interval $I_q^{(k)}$ which is by definition the union 
    \begin{equation}
         \bigcup_{p=1}^q I_p^{(k-1)}\ \text{ (composed with a translation } \tau^{(k-1)}_q)
    \end{equation}
    has a decomposition $I_q^{(k)}=[a_q,b_q] \cup [b_q,c_q]$ where $f_1\big|_{[b_q,c_q]}$ has slope $1/m$ and where
    \begin{equation}
        \lim_{q\to \infty}(c_q-b_q) = \infty.
    \end{equation}
This observation gives the Lemma.
\end{proof}

{$f^{(1)}$ is linear on $[0,n]$ and $[n, m+n]$ so the  
The interval $[0,d]$ can be partitioned into two intervals of linearity: $I_{1}=[0,n]$ and $I_{2}=[n,n+m]$. We need to compute the separately the scores: $\delta(f,I_{1})$, and $\delta(f,I_{2})$. In both cases, the two intervals of equality corresponding to the template $f$ on $I_{j}$. $j \in \lbrace 1,2 \rbrace$, are: $(0,1]$ and $(1,d]$. So we need to compute $M_{+}(I,J),M_{-}(I,J)$ for the two different choices of interval of linearity $I$, and interval of equality $J$.  
\\
First we consider $M_{+}(I_{1},J),M_{-}(I_{1},J)$. Denote, for the sake of this computation only: 
\begin{equation}
    M_{+}=M_{+}(I_{1},(0,1]),    M_{-}=M_{-}(I_{1},(0,1]) , 
    M_{+}'=M_{+}(I_{1},(1,d]), 
    M_{-}'=M_{-}(I_{1},(1,d])
\end{equation}
Now, from the definition of $M_{\MP}$ it follows that: 
\begin{equation}
    M_{+}+M_{-}=1  ,  \frac{M_{+}}{m}-\frac{M_{-}}{n}=f'_{1}(I_{1})=-\frac{1}{n} 
    \\
    M_{+}'+M_{-}'=d-1  ,  \frac{M_{+}'}{m}-\frac{M_{-}'}{n}=(d-1)f'_{2}(I_{1})=(d-1)\frac{1}{n(m+n-1)}=
    \frac{1}{n}
\end{equation}
Solving the two linear systems of equations appearing above one obtains: 
\begin{equation}
    M_{+}=0,M_{-}=1
    \\
    M_{+}'=m,M_{-}'=n-1
\end{equation}
Now, we can find the set $S_{+}(f,I_{1})$. $S_{+}(f,I_{1})=(0,0+M_{+}] \cup (1,1+M_{+}']=[2,1+m]$. 
\\
So it remains to compute the score $\delta(f,I_{1})$. Let us recall the definition of $\delta$. We defined: 
\begin{equation}
    \delta(f,I):= \#\left\lbrace (i_+, i_-) \in S_+(f,I)\times S_-(f,I): i_+< i_- \right\rbrace.
\end{equation}
hence $\delta(f,I)=m(d-m-1)=m(n-1)$ since in our case, if $(i_+, i_-) \in S_+(f,I)\times S_-(f,I)$ then automatically, we also have $i_+ \leq i_-$, unless $i_{-}=1$.
\\
Now, let us compute $\delta$ for the case $I=I_{2}$. For the sake of this calculation, we denote:
\
begin{equation}
    M_{+}=M_{+}(I_{2},(0,1]),    M_{-}=M_{-}(I_{2},(0,1]) , 
    M_{+}'=M_{+}(I_{2},(1,d]), 
    M_{-}'=M_{-}(I_{2},(1,d])
\end{equation}
We compute the above terms exactly as we did on the previous case, and obtain this time, that 
\begin{equation}
    M_{+}=1 , M_{-}=0
    \\
    M_{+}'=m-1 , M_{-}'=n
\end{equation}
Therefore, this time we have: $S_{+}(f,I_{2})=
{1} \cup (1,m]=[1,m]$. 
\\
Hence we obtain that in this case $\delta(f,I_{2})=mn$. 
\\
Now, we compute the total score of the template:
\begin{equation}
    \underline{\delta}(f) := \liminf_{T \to \infty} \frac{1}{T} \int_0^T \delta(f,t)dt.
\end{equation}
Now since the entire template is built from the same one piece (even though this piece is deformed, it doesn't affect the score since the slopes are preserved), we know that on the intervals where $f^{(1)}$ is decreasing the score is $m(n-1)$ and on the intervals where it is increasing, it is equal to $mn$. 
\\
Now, for $T$ such that $T= \sum_{k=1}^{l} kd$, we have
\begin{equation}
   \frac{1}{T} \int_0^T \delta(f,t)dt=
   \frac{1}{\sum_{k=1}^{l} kd} \sum_{k=1}^{l} \int_{(k-1)d}^{kd} \delta(f,t)dt=
   \frac{1}{\sum_{k=1}^{l} kd} \sum_{k=1}^{l} [knm(n-1)+km^{2}n]=\frac{mn}{m+n}\frac{1}{\sum_{k=1}^{l} k}\sum_{k=1}^{l} [k((n-1)+m)]=
   \frac{2mn}{(m+n)l(l+1)}\frac{l(l+1)}{2}(m+n-1)=mn-\frac{mn}{m+n}
\end{equation}
So we proved the lemma for $f^{(1)}$. 
\\
Now suppose we know already that $\delta(f^{(k-1))}=\delta_{m,n}$. The template $f^{(k)}$ is composed of longer and longer pieces of $f^{(k-1)}$. Consider the intervals $I_{q}$, and denote their lengths by $a_q$. Suppose now that $T= \sum_{q=1}^{l} a_q$. Then:
\begin{equation}
    \frac{1}{T} \int_0^T \delta(f,t)dt=
   \frac{1}{\sum_{k=1}^{l} a_{q}} \sum_{k=1}^{l} \int_{I_{q}} \delta(f,t)dt=
   \frac{1}{\sum_{k=1}^{l} a_{q}} \sum_{k=1}^{l} \int_{\tau_{q}(I_{q})} \delta(f^{(k-1)},t)dt
\end{equation}

\begin{equation}
    \delta(f^{(k)})=\frac{1}{\sum_{k=1}^{l} a_{q}} \sum_{k=1}^{l} \int_{\tau_{q}(I_{q})} \delta(f^{(k-1)},t)dt \rightarrow \delta_{m,n}
\end{equation}
and we are done. }

We now describe some upcoming work of N. Moshchevitin and the second and third-named authors of the present paper which provides a link between the notion of $k$-divergence and Diophantine approximation.
We present a special case of a general theorem therein: we work in the case when $m \in \N$ is arbitrary and $n=1$ so that the sets $\D(k)$ are defined with respect to the semiflow 
\begin{equation*}
    a_t = \operatorname{diag}(e^{t/m}, \dots, e^{t/m}, e^{-t}) \text{ for } t \in \R_{\geq 0}.
\end{equation*}
We further denote by $\langle \cdot \rangle$ the distance to nearest integer.
\begin{thm}[cf. Theorem ? in \cite{MRS}]\label{thm: diophantine-application}
    Let $(\theta_i) \in \R^m$ be such that the lattice 
    \begin{equation*}
        x_\theta:= \left[ {\begin{array}{cc} I_m & \theta  \\ 
    0 & 1 \end{array}}\right]\Z^d \in X_d
    \end{equation*}
    is not in $\cup_{k=0}^{m-1} \D(k)$.
    Further, let $(a_i) \in \Z^m\smallsetminus \{0\}$ and let $(\eta_i) \in \R^m$. Then, for Lebesgue almost every $t \in \R$, we have
    \begin{equation}\label{eq: ZAP}
        \liminf_{q \in \N} \left(|q|^{1/m} \cdot \max_{i=1,\dots, m}  \langle q \theta_i - (ta_i + \eta_i)\rangle \right) = 0.
    \end{equation}
\end{thm}
Clearly, if we choose $(\theta_i) = 0 \in \R^m$, $(a_i) \in \Z^m \smallsetminus \{0\}$ arbitrary and $(\eta_i) \in \R^m \smallsetminus \Z^m$, then the conclusion of the theorem fails. In this case,  $x_\theta \in \D(0)$. However, in \cite{MRS} we even present a counter-example where $\theta \in \R^3$ and $x_\theta \in \D(1) \cup \D(2)$. (We are as yet unsure which of these sets $x_\theta$ belongs to.)
Theorem \ref{thm: diophantine-application} should be contrasted with \cite[Theorem 1.4]{ET} which says that the liminf in \eqref{eq: ZAP} is positive on a full dimension set of $t \in \R$. \comm{Check this. What does eta-decaying mean?} We further remark that one can prove more easily, when $x_\theta \notin \D(0)$, the liminf in \eqref{eq: ZAP} is $0$ when the inhomogeneous parameter is generic with respect to the full Lebesgue measure on the torus. \comm{cite Uri here?}